\newtheorem{thm}{Theorem}[section]
\newtheorem{lemma}[thm]{Lemma}
\newtheorem{theorem}[thm]{Theorem}
\newtheorem{proposition}[thm]{Proposition}
\newtheorem{corollary}[thm]{Corollary}
\theoremstyle{definition}
\newtheorem{note}[thm]{Note}
\newtheorem{question}[thm]{Question}
\newtheorem{conjecture}[thm]{Conjecture}
\newtheorem{example}[thm]{Example}
\newtheorem{remark}[thm]{Remark}
\newtheorem{mydef}[thm]{Definition}
\numberwithin{equation}{section}
\def\Z{\ifmmode{{\mathbb Z}}\else{${\mathbb Z}$}\fi}
\def\Q{\ifmmode{{\mathbb Q}}\else{${\mathbb Q}$}\fi}
\def\P{\ifmmode{{\mathbb P}}\else{${\mathbb P}$}\fi}
\def\R{\ifmmode{{\mathbb R}}\else{${\mathbb R}$}\fi}
\def\F{\ifmmode{{\mathbb F}}\else{${\mathbb F}$}\fi}
\def\O{\ifmmode{{\calO}}\else{${\calO}$}\fi}
\newcommand{\calA}{{\mathcal A}}
\newcommand{\calC}{{\mathcal C}}
\newcommand{\calD}{{\mathcal D}}
\newcommand{\calE}{{\mathcal E}}
\newcommand{\calF}{{\mathcal F}}
\newcommand{\calH}{{\mathcal H}}
\newcommand{\calO}{{\mathcal O}}
\newcommand{\calP}{{\mathcal P}}
\newcommand{\calQ}{{\mathcal Q}}
\newcommand{\calU}{{\mathcal U}}
\newcommand{\calW}{{\mathcal W}}
\newcommand{\calX}{{\mathcal X}}
\newcommand{\calY}{{\mathcal Y}}
\newcommand{\calZ}{{\mathcal Z}}
\DeclareMathOperator{\inv}{inv}
\DeclareMathOperator{\im}{im}
\DeclareMathOperator{\Spec}{Spec}
\DeclareMathOperator{\ev}{ev}
\DeclareMathOperator{\red}{red}
\DeclareMathOperator{\res}{res}
\DeclareMathOperator{\cores}{cores}
\DeclareMathOperator{\Bl}{Bl}
\DeclareMathOperator{\Br}{Br}
\newcommand{\A}{\mathbb{A}}
\newcommand{\injects}{\hookrightarrow}
\newcommand{\isom}{\simeq}
\let\originalleft\left
\let\originalright\right
\renewcommand{\left}{\mathopen{}\mathclose\bgroup\originalleft}
\renewcommand{\right}{\aftergroup\egroup\originalright}
\subjclass[2020]
{14G05 (primary), 
	11G35   %
(secondary)}
\title{Weak approximation and the Hilbert property for Campana points}
\author{Masahiro Nakahara}
\thanks{The first-named author was supported by EPSRC grant EP/R021422/2. The second-named author was supported by a Heilbronn Research Fellowship.}
\address{Masahiro Nakahara, Department of Mathematics, University of Washington, Seattle, WA 98195, USA.}
\author{Sam Streeter}
\address{Sam Streeter, School of Mathematics, University of Bristol, Woodland Road, Bristol, BS8 1UG, UK.}
\begin{document}
\begin{abstract}
We study weak approximation and the Hilbert property for Campana points, both of importance in recent work on a Manin-type conjecture by Pieropan, Smeets, Tanimoto and V\'arilly-Alvarado. We show that weak weak approximation implies the Hilbert property for Campana points, and we exploit this to exhibit Campana orbifolds whose sets of Campana points are not thin.
\end{abstract}
\maketitle
\tableofcontents
\section{Introduction}
The theory of Campana points offers the enticing prospect of being able to interpolate between rational and integral points. With this prospect comes the hope that results and techniques from the study of rational points may be pulled into the Campana realm, from which they may shed light on the mysterious world of integral points. Further, the theory naturally lends itself to number-theoretic problems involving \emph{$m$-full} solutions of homogeneous equations. (Given $m \in \mathbb{Z}_{\geq 2}$, we say that $n \in \mathbb{Z}$ is $m$-full if $p \mid n$ implies $p^m \mid n$ for all primes $p$.) The study of solutions to equations with stipulations on prime decomposition is well-established within number theory. For instance, the Erd\H{o}s--Mollin--Walsh conjecture asserts that the system of linear equations $x+1 = y$, $y+1=z$ has no solutions in $m$-full numbers for $m=2$ (thus for any $m \geq 2$). Despite their arithmetic appeal, questions regarding the geometric abundance and distribution of Campana points remain largely unanswered outside of the case of curves: for curves, the orbifold version of the Mordell conjecture has been proved over function fields in characteristic zero by Campana (\cite{CAM05}) and in positive characteristic by Kebekus, Pereira and Smeets \cite{KPS}, while Smeets showed in \cite[Appendix]{SME} that the analogous result over number fields is implied by the $abc$ conjecture. In particular, the question of whether the Campana points of an orbifold are thin is open even for the most elementary orbifold structures on projective space. Thin sets play an important role in modern refinements of Manin's conjecture originally due to Peyre \cite{PEY}, in which the exceptional set is thin (see \cite[Thm.~1.3]{LR} and \cite[Thm.~1.1]{BHB} for examples and \cite[\S5]{LST} for an overview). Recently, a Manin-type conjecture for Campana points was formulated by Pieropan, Smeets, Tanimoto and V\'arilly-Alvarado \cite[Conj.~1.1]{PSTVA}, in which the exceptional set is also thin. Work of the second author \cite[\S7]{STR} and of Shute \cite{SHU1, SHU2} suggests issues around the conjecture's predicted leading constant, as asymptotics without the removal of a thin set may produce a different leading constant, and it is not clear which thin set to remove in order to reconcile this difference. However, hope remains of a Campana analogue of the thin set formulation of Manin's conjecture, and the exponents in all asymptotics to date agree with the conjecture with those in \cite[Conj.~1.1]{PSTVA}, hence the study of thin sets of Campana points is well-motivated.

In this paper, we take a step in this direction by studying local and adelic Campana points and their relationship to their rational counterparts. In particular, we focus on \emph{Campana weak weak approximation} (Definition \ref{def:CWWA}(i)). This property is interesting in its own right, since it tells us whether the Campana points of an orbifold are ``equidistributed'' in some sense. Further, it relates, as it does for rational points, to the Hilbert property. We show that Campana weak weak approximation implies that the set of Campana points is not thin, i.e.\ that the orbifold in question satisfies the \emph{Campana Hilbert property} (Definition \ref{def:CHP}). We exploit this connection in order to verify the Campana Hilbert property for various orbifolds. We also generalise a version of the fibration method used in the study of the Hilbert property for surfaces in order to verify the Campana Hilbert property for del Pezzo surfaces with a rational line.
\subsection{Results}
Let $\left(X,D\right)$ be a Campana orbifold (Definition \ref{def:CO}) over a number field $K$. In particular, $D$ is a $\Q$-divisor on $X$ which we write as
$$
D = \sum_{\alpha \in \calA} \epsilon_{\alpha} D_{\alpha},
$$
where the $D_{\alpha}$ are prime divisors and $\epsilon_{\alpha} = 1 - \frac{1}{m_{\alpha}}$ for some $m_{\alpha} \in \Z_{\geq 2} \cup \{\infty\}$ (taking $\epsilon_\alpha = 1$ when $m_\alpha = \infty$).

Let $S$ be a finite subset of places of $K$ containing all of the archimedean places. Let $\calX$ be a flat proper $\O_S$-model for $X$ and let $\calD$ be the closure of $D$ in $\calX$. We denote the set of Campana points of $(\calX,\calD)$ by $(\calX,\calD)(\O_S)$ (Definition \ref{def:CP}). This is a subset of $X(K)$, and one can ask whether it is thin. If not, we say that $(\calX,\calD)$ satisfies the Campana Hilbert property (CHP).

Recall that a variety $Y$ over $K$ satisfies \emph{weak weak approximation} (WWA) if there exists a finite subset of places $S$ such that $Y(K)$ is dense in $\prod_{v \not\in S}Y\left(K_v\right)$. It was shown by Colliot-Th\'el\`ene and Ekedahl that weak weak approximation implies the Hilbert property \cite[Thm.~3.5.7]{SER}. 

Campana weak weak approximation (Definition \ref{def:CWWA}(i)), abbreviated to CWWA, is defined by replacing rational points and local points by their Campana analogues. Our first result is that the Campana analogue of Colliot-Th\'el\`ene and Ekedahl's result holds, i.e.\ CWWA implies CHP.
\begin{theorem} \label{cwwathm}
Let $\left(X,D\right)$ be a Campana orbifold over a number field $K$, let $S$ be a finite subset of places of $K$ containing all archimedean places and let $\left(\calX,\calD\right)$ be an $\calO_S$-model of $\left(X,D\right)$. If $\left(\calX,\calD\right)$ satisfies Campana weak weak approximation and $(\calX,\calD)(\O_S)\neq\emptyset$, then it has the Campana Hilbert property.
\end{theorem}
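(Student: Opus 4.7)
The plan is to adapt the Colliot-Th\'el\`ene--Ekedahl proof that weak weak approximation implies the Hilbert property \cite[Thm.~3.5.7]{SER} to the Campana setting, using local Campana points in place of local rational points throughout. Suppose for contradiction that $(\calX,\calD)(\calO_S)$ is thin, so that it is contained in a finite union $\bigcup_{i=1}^{n}\pi_i(Y_i(K))$ where each $\pi_i\colon Y_i\to X$ is either of \emph{type I} (with $\dim Y_i<\dim X$, so $\pi_i(Y_i)$ is contained in a proper closed subvariety of $X$) or of \emph{type II} (dominant and generically finite of degree $d_i>1$); after resolution we may take each $Y_i$ smooth, projective and geometrically integral.

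The key topological observation is that for any $v\notin S$ of good reduction the local Campana condition---that each $v$-adic valuation $v(f_\alpha(x))$ of a local defining equation of $D_\alpha$ avoid $\{1,\dots,m_\alpha-1\}$---is clopen in the $v$-adic topology of $X(K_v)$, so $(\calX,\calD)(\calO_v)\subseteq X(K_v)$ is open. It is non-empty at every such $v$ by the hypothesis $(\calX,\calD)(\calO_S)\neq\emptyset$, since the local components of a global Campana point land in the local Campana points.

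For each type II map $\pi_i$, we produce infinitely many places $v$ and non-empty opens $U_{i,v}\subseteq(\calX,\calD)(\calO_v)$ disjoint from $\pi_i(Y_i(K_v))$ by a Chebotarev argument: on a dense open $X^\circ\subseteq X$ on which $\pi_i$ is \'etale, one passes to the Galois closure with monodromy group $G\subseteq S_{d_i}$ and picks $\bar x\in X^\circ(k_v)$ whose Frobenius conjugacy class consists of derangements (which exists by Jordan's theorem as $G$ is transitive and $d_i>1$) and additionally satisfies $\bar x\notin\calD(k_v)$; the residue disk of such $\bar x$ is then disjoint from $\pi_i(Y_i(K_v))$ and contained in $(\calX,\calD)(\calO_v)$. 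For each type I map $\pi_i$, $\pi_i(Y_i(K_v))$ is closed in $X(K_v)$ and contained in the $K_v$-points of a proper closed subvariety of $X$, so at any $v\notin S$ every non-empty open inside $(\calX,\calD)(\calO_v)$ contains a non-empty open $U_{i,v}$ disjoint from $\pi_i(Y_i(K_v))$.

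Choosing distinct places $v_1,\dots,v_n$ outside the finite set $S'$ furnished by Campana weak weak approximation, together with opens $U_{i,v_i}$ as above, the product $\prod_{i=1}^{n}U_{i,v_i}\times\prod_{v\notin S'\cup\{v_1,\dots,v_n\}}(\calX,\calD)(\calO_v)$ is a non-empty open subset of $\prod_{v\notin S'}(\calX,\calD)(\calO_v)$ and so, by Campana weak weak approximation, contains a point of $(\calX,\calD)(\calO_S)$; this point avoids every $\pi_i(Y_i(K))$, contradicting thinness. The main obstacle lies in the type II step: one must combine the Chebotarev density argument with a careful choice of residue point off $\calD_v$ so that the resulting residue disk lies inside the local Campana points. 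For $v$ of sufficiently large residue characteristic, a point-counting comparison between the density of the derangement class in $X^\circ(k_v)$ and the size of $\calD(k_v)$ should furnish this compatibility, but the uniform bookkeeping across all $\pi_i$ is the delicate part.
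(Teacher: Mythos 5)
Your proposal follows the same strategy as the paper's proof: decompose the putative thin cover into type I and type II pieces, use Chebotarev/Lang--Weil to locate residue points simultaneously avoiding the cover and the orbifold divisor, lift to local Campana points, and conclude via CWWA. Two places where you are slightly loose and the paper is more careful. First, the paper's definition of Campana orbifold requires $X$ only to be proper and \emph{normal}, not smooth, so in selecting the residue point $\bar x$ one must also avoid the singular locus $Y$ of $X$ in order to have a smooth point to which Hensel's lemma applies; the paper explicitly excises $\calY_v(\F_v)$ alongside $\calD_v(\F_v)$ and (the image of) $\calW_v(\F_v)$. Second, your type I claim that ``at any $v\notin S$'' one can find an open $U_{i,v}\subset(\calX,\calD)(\calO_v)$ disjoint from $\pi_i(Y_i(K_v))$ is too strong as stated: for small $q_v$ all local Campana points could a priori reduce to $\calW_v$ or to singular points of $\calX_v$, so one needs $q_v$ large enough for the Lang--Weil estimate to produce a smooth $\F_v$-point off $\calD_v\cup\calW_v\cup\calY_v$; the paper invokes Lang--Weil in both the type I and type II cases. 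The ``delicate uniform bookkeeping'' you flag at the end is not actually an obstacle: as in Serre's original argument, one treats one thin piece at a time, choosing a different place $v_i$ with $q_{v_i}$ large (and, for type II, totally split in the relevant Galois closure) for each piece, and then combines via CWWA; no uniformity across the $\pi_i$ is required.
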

With this relation in mind, we devote the rest of the paper to Campana weak weak approximation and the stronger property of \emph{Campana weak approximation} (Definition \ref{def:CWWA}(ii)).

We first turn to the case when $X=\P^n_K$ for some number field $K$, $\calA=\{0,\ldots,r\}$ and the $D_i$ are hyperplanes. We say that an orbifold $(X,D)$ is log Fano when $-(K_X+D)$ is ample. Then the orbifold $(\P^n,D)$ is log Fano precisely when
\begin{equation}\label{eqn:logFano}
\sum_{i=0}^r \left(1-\frac{1}{m_i}\right) <  n+1.
\end{equation}
A variant of the following question was asked in \cite[Question~3.8]{PSTVA}.
\begin{question}\label{question}
Suppose that $(\P^n,D)$ is log Fano. Given a finite set of places $S \subset \Omega_K$ containing all archimedean places, is the set of Campana points $(\P^n_{\O_S},\calD)(\O_S)$ non-thin?
\end{question}
Asymptotics for Campana points of bounded height on such orbifolds over $\Q$ were studied by Browning and Yamagishi \cite{BY}. They considered the case where $r=n+1$ (the first non-trivial case for counting) and $D_i$ is the coordinate hyperplane $x_i = 0$ for $0 \leq i \leq n$, while $D_{n+1}$ is given by $\sum_i x_i=0$. Using their asymptotics, they showed that a certain collection of thin sets are not enough to cover $(\P^n_{\Z},\calD)(\Z)$ \cite[Thm.~1.3]{BY} for $n$ large relative to the $m_i$. In other words, the number of Campana points grows faster than those in these thin sets. 

Our approach to answer Question \ref{question} is to study the distribution of Campana points through Campana weak weak approximation. We give an affirmative answer when $r \leq n$ and $H_0,\dots,H_r$ are in general linear position (Definition \ref{def:glp}) provided that there exist local Campana points over each place $v \not\in S$. Note that in this case, \eqref{eqn:logFano} is satisfied for all choices of $m_i$. In fact, we prove the stronger result that Campana weak approximation (Definition \ref{def:CWWA}(ii)) holds. We also prove a partial result when $r=n+1$, conditional on a conjecture of Colliot-Th\'el\`ene on weak approximation with Brauer--Manin obstruction (\ref{def:WABMO}) for geometrically rationally connected varieties, which needs only to be assumed for certain Fano diagonal hypersurfaces.
\begin{theorem} \label{thm:bmo} 
Let $\left(\P^n,D\right)$ be as above with $r \leq n+1$, and suppose that $H_0,\dots,H_r$ are in general linear position.
\begin{enumerate}[label=(\roman*)]
\item If $r \leq n$, then $\left(\P^n_{\calO_S},\calD\right)$ satisfies Campana weak approximation.
\item Let $r = n+1$. Assume that all $m_i$ are equal to some integer $m$. If weak approximation with Brauer--Manin obstruction holds for smooth diagonal hypersurfaces of degree $m$ in $\mathbb{P}^{n+1}$, then $\left(\P^n_{\calO_S},\calD\right)$ satisfies Campana weak weak approximation. Further, if $\left(m,n\right) \neq \left(3,2\right)$, then $\left(\P^n_{\calO_S},\calD\right)$ satisfies Campana weak approximation.
\end{enumerate}
\end{theorem}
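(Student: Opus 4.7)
My plan for both parts is to realise the Campana points of $(\mathbb{P}^n, D)$ as the image of $K$-points on an auxiliary variety $Y$ with known (weak) weak approximation, and transfer the approximation from $Y$ down to the orbifold. The unifying device is to raise each coordinate corresponding to a Campana hyperplane to its $m_i$-th power, which automatically encodes the Campana condition $v(a_i) \in \{0\} \cup [m_i, \infty)$; in order to reach all $m_i$-th power cosets of $K_v^\times$ at the relevant places, I introduce a family of global scalar twists $u_i \in K^\times$ in the parametrisation.

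For Part (i), after a linear change of coordinates I may assume $H_i = \{x_i = 0\}$ for $0 \leq i \leq r$. Given local Campana points $P_v = [a_0^{(v)} : \cdots : a_n^{(v)}]$ for $v$ in a finite set $T$ disjoint from $S$, I would first choose, by weak approximation on $\mathbb{G}_m$, twists $u_i \in K^\times$ such that $v(u_i) \equiv v(a_i^{(v)}) \pmod{m_i}$ at each $v \in T$ and $u_i \in \calO_v^\times$ outside a small controlled finite set of places. Then, by weak approximation on $\mathbb{G}_m^{r+1} \times \mathbb{A}^{n-r}$, I find $(y_0, \ldots, y_n)$ so that $[u_0 y_0^{m_0} : \cdots : u_r y_r^{m_r} : y_{r+1} : \cdots : y_n]$ is a global Campana point approximating each $P_v$. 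The Campana condition at $v \notin S$ is automatic where $u_i$ is a local unit, and can be enforced at the finitely many exceptional places by prescribing $v(y_i)$ there.

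For Part (ii), with $r = n+1$ and all $m_i = m$, after normalisation take $H_{n+1} = \{\sum_{i=0}^n x_i = 0\}$; the new Campana condition $v(\sum_i a_i) \in \{0\} \cup [m, \infty)$ is enforced by also parametrising $\sum_i a_i$ as an $m$-th power. With twists $u_i$ chosen as in Part (i), this leads to the family of auxiliary varieties
\[
Y_{\vec u} = \bigl\{u_0 y_0^m + u_1 y_1^m + \cdots + u_n y_n^m = y_{n+1}^m\bigr\} \subset \mathbb{P}^{n+1},
\]
each a smooth diagonal Fano hypersurface of degree $m$ in $\mathbb{P}^{n+1}$, with $\phi_{\vec u}(y) = [u_0 y_0^m : \cdots : u_n y_n^m]$ sending $K$-points of $Y_{\vec u}$ to Campana points of $(\mathbb{P}^n, D)$. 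The hypothesis on weak approximation with Brauer--Manin obstruction for such hypersurfaces then transfers approximation from $Y_{\vec u}(K)$ down to $(\mathbb{P}^n_{\calO_S}, \calD)(\calO_S)$, giving Campana weak weak approximation. For $(m, n) \neq (3, 2)$, known results on the Brauer groups of smooth diagonal Fano hypersurfaces imply that each $Y_{\vec u}$ satisfies weak approximation unconditionally, upgrading the conclusion to full Campana weak approximation; the excluded case is the smooth diagonal cubic surface, whose Brauer--Manin obstruction is genuinely non-trivial.

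The main obstacle I anticipate is the coordinated choice of twists $\vec u$ so that the images of the maps $\phi_{\vec u}$ collectively cover all prescribed combinations of $m_i$-th power cosets of $K_v^\times$ at each $v \in T$, while simultaneously respecting the Campana condition at every $v \notin S$. In Part (ii) there is the further subtlety of verifying that any Brauer--Manin-unobstructed local Campana point actually lifts to a $K_v$-point of some well-chosen twist $Y_{\vec u}$, so that the Brauer--Manin information on the auxiliary hypersurface genuinely controls the approximation on the orbifold; this is where the cubic surface case resists.
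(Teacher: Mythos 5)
For Part (ii), your auxiliary family $Y_{\vec u}$ and map $\phi_{\vec u}$ coincide with the paper's family $\mathfrak{X}\to\P^{n+1}$ (equation \eqref{eqn:defX}) and map $\rho$, so the high-level plan matches. The serious gap—which you do partly anticipate—is your claim that $\phi_{\vec u}$ sends $K$-points of $Y_{\vec u}$ to Campana points: this is false in general. It holds at a place $v$ only when $v(u_i)=v(u_j)$ for all $i,j$; at places dividing $m$ or dividing some $u_i$ one must argue separately. The paper isolates exactly this in Lemma~\ref{lem:campanaHypersurface}, and Proposition~\ref{prop:cwadiagonal} then selects the coefficient vector $[a_0,\dots,a_{n+1}]$ so that $v(a_i)=v(a_j)$ outside a finite controlled set $E$ by approximating $[-1,1,\dots,1]$ at the relevant places, arranging the adelic point only at $T'\cup E$. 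For $(m,n)=(3,2)$ the further difficulty you flag is resolved by Lemma~\ref{lem:projlogfano}, which shows the Brauer evaluation map is already surjective when restricted to the curve $\{x_0=0\}\subset X$, so that the Brauer--Manin correction at the auxiliary place $v_0$ can be made while remaining inside the Campana locus; your scheme as described has no mechanism to reconcile the Brauer--Manin constraint with the valuation constraints defining Campana points.

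For Part (i), your route (twists $u_i$, $m_i$-th powers, weak approximation on $\mathbb{G}_m^{r+1}\times\A^{n-r}$) is genuinely different from the paper's, and as written contains a gap. The claim that the Campana condition is ``automatic where $u_i$ is a local unit'' is false, because weak approximation does not control the common factors of the $y_j$ at uncontrolled places. Concretely, if $v(y_j)=1$ for all $j$ at some uncontrolled $v$ and $r<n$, then after normalising, $v(x_i)-\min_j v(x_j)=m_i-1$ for $i\le r$, which is strictly between $0$ and $m_i$; the same failure occurs when $r=n$ but the $m_i$ are not all equal. The paper's Proposition~\ref{prop:projcampdense} circumvents this by an entirely different device: a \emph{single} large exponent $d=1+\prod_v d_v\ge\max_i m_i$, chosen so that $\alpha^d\equiv\alpha\pmod{\pi_v^N}$ at controlled places, while at uncontrolled places $v(x_i)=d\,v(\alpha_i)$ so all valuation differences are multiples of $d$ and hence automatically $0$ or $\ge m_i$; strong approximation on $\A^1$ (valid, unlike for $\mathbb{G}_m$) supplies the $\alpha_i\in\O_S$. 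Your version could potentially be rescued by replacing weak approximation with strong approximation on $\A^{n+1}\setminus\{0\}$ to force the $y_j$ to be coprime $S$-integers, but that is not what you propose and is a necessary additional idea.
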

Since weak approximation holds for quadrics \cite[Thm.~2.2.1]{HAR}, the assumption of weak approximation with Brauer--Manin obstruction in the second part of Theorem \ref{thm:bmo} is satisfied when $m = 2$ (for any $n$).

Similarly, results of Wooley \cite[Thm.~15.6]{WOO19} and of Birch \cite[Thm.~3]{BIR} on the Hasse principle collectively establish (as a byproduct) weak approximation for diagonal hypersurfaces of degree $m$ in $\mathbb{P}^{n+1}$ for $n \geq \min\{m\left(m+1\right) - 1,2^m-1\}$, which improves upon Skinner's result for all $m \geq 3$. (For details on the derivation of weak approximation from circle method proofs of the Hasse principle, see \cite[\S5]{SKI}.) This bound can be improved to $n \geq \min\{m\left(m+1\right) - 2,2^m-1\}$ for $K = \mathbb{Q}$ using \cite[Thm.~1.4]{BY}, and one can make a further small improvement using methods from \cite{WOO92} and \cite{WOO95}. We therefore deduce the following result.
\begin{corollary} \label{cor:cwa}
Let $\left(\P^n,D\right)$ be as in the second part of Theorem \ref{thm:bmo}. Campana weak approximation for $\left(\P^n_{\O_S},\calD\right)$ holds in the following cases. 
\begin{enumerate}[label=(\roman*)]
\item $m=2$.
\item $n \geq \min\{m\left(m+1\right)-1,2^m-1\}$.
\item $K = \mathbb{Q}$ and $n \geq \min\{m\left(m+1\right) - 2,2^m-1\}$.
\end{enumerate}
\end{corollary}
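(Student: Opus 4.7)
The plan is to deduce the corollary directly from Theorem \ref{thm:bmo}(ii). That theorem imposes two hypotheses on the pair $(m,n)$: first, weak approximation with Brauer--Manin obstruction for smooth diagonal hypersurfaces of degree $m$ in $\P^{n+1}$; second, $(m,n) \neq (3,2)$. Since unconditional weak approximation trivially implies weak approximation with Brauer--Manin obstruction, in cases (ii) and (iii) it will suffice to verify unconditional weak approximation for the relevant diagonal hypersurfaces, while in case (i) I will invoke weak approximation with Brauer--Manin obstruction directly. In each case one must also check that the stated bound on $n$ excludes $(m,n) = (3,2)$.

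For case (i), with $m = 2$, weak approximation with Brauer--Manin obstruction for smooth quadrics in $\P^{n+1}$ is precisely \cite[Thm.~2.2.1]{HAR}. The exclusion $(m,n) \neq (3,2)$ holds trivially since $m = 2$, so Theorem \ref{thm:bmo}(ii) yields Campana weak approximation.

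For cases (ii) and (iii), the required unconditional weak approximation statements follow by assembling the circle-method results summarized in the paragraph preceding the corollary: \cite[Thm.~15.6]{WOO19} together with \cite[Thm.~3]{BIR} yields weak approximation for smooth diagonal degree-$m$ hypersurfaces in $\P^{n+1}$ whenever $n \geq \min\{m(m+1)-1, 2^m-1\}$ over an arbitrary number field $K$, while the refinements \cite[Thm.~1.4]{BY}, \cite{WOO92} and \cite{WOO95} lower this bound to $n \geq \min\{m(m+1)-2, 2^m-1\}$ when $K = \Q$. To verify $(m,n) \neq (3,2)$: when $m = 3$, these bounds become $\min\{11,7\} = 7$ and $\min\{10,7\} = 7$ respectively, both forcing $n \geq 7$. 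Theorem \ref{thm:bmo}(ii) therefore applies in each case and gives Campana weak approximation. No genuine obstacle arises beyond tracking the different regimes in which each circle-method estimate is effective and taking the minimum of the resulting bounds.
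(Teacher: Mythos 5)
Your argument is correct and follows the same route as the paper, which proves the corollary in the paragraph directly preceding its statement: one invokes unconditional weak approximation (or weak approximation with Brauer--Manin obstruction, in the quadric case) for the relevant diagonal hypersurfaces to discharge the hypothesis of Theorem~\ref{thm:bmo}(ii), and then observes that the stated bounds on $n$ exclude $(m,n)=(3,2)$, so that the CWA (not merely CWWA) conclusion of the theorem applies. The only small inaccuracy is attributional: the bound $n \geq \min\{m(m+1)-2,2^m-1\}$ over $\Q$ is due to \cite[Thm.~1.4]{BY} alone, while \cite{WOO92} and \cite{WOO95} are mentioned by the authors only as sources of a further unstated improvement, not as inputs to the bound appearing in part~(iii).
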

\begin{note}
One may view Corollary \ref{cor:cwa} as a generalisation of \cite[Thm.~1.3]{BY} in the case where all of the $m_i$ are equal. Indeed, loc.\ cit.\ shows that certain thin sets are not enough to cover the Campana points when $n \geq m\left(m+1\right) - 2$, whereas our results show that no collection of thin sets can do this.
\end{note}
Next we develop a fibration result for Campana points, based on the work of Bary-Soroker, Fehm and Petersen \cite[Thm.~1.1]{BSFP}. As an application, we obtain the following result for del Pezzo surfaces.
\begin{theorem} \label{thm:dp}
Let $X$ be a del Pezzo surface of degree $d$ over a number field $K$, let $L \subset X$ be a rational line on $X$, and let $m \geq 2$ be an integer. Put $D = \left(1-\frac{1}{m}\right)L$. Let $\left(\calX,\calD_m\right)$ be an $\calO_S$-model of $\left(X,D\right)$ for some finite set of places $S$ of $K$ containing all archimedean places. Suppose that one of the following holds.
\begin{enumerate} [label=(\roman*)]
\item $d=4$ and $X$ has a conic fibration.
\item $d=3$.
\item $d=2$ and $X$ has a conic fibration.
\end{enumerate}
Then $\left(\calX,\calD\right)$ has the Campana Hilbert property.
\end{theorem}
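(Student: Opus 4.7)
The plan is to reduce the theorem to the paper's Campana analog of the Bary-Soroker-Fehm-Petersen fibration result (developed in the preceding section) by constructing a suitable conic fibration $\pi\colon X \to \P^1_K$ in each case. In case (b), projection from the rational line $L$ yields such a fibration: viewing $X$ in its anticanonical embedding $X \hookrightarrow \P^3_K$, each plane $\Pi_t$ in the pencil of planes through $L$ intersects $X$ in $L$ together with a residual conic $C_t$, and one obtains a morphism $\pi\colon X \to \P^1_K$ (after resolving indeterminacies along $L$) whose general fiber is $C_t$. In cases (a) and (c), the conic fibration is part of the hypothesis.

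Next I would describe the Campana structure induced on a general fiber. In case (b), $L$ meets $C_t$ in exactly two points (the intersection of a line and a conic inside a common plane), and for a Zariski-dense set of $t$ at least one of these points is rational. In cases (a) and (c), the intersection number $L \cdot F$ with a fiber class lies in $\{0,1,2\}$, and I would argue that (possibly after a mild modification) we may take $L \cdot F \geq 1$, so that $L$ serves as a section or bisection of $\pi$ and supplies rational points on $C_t$ for a Zariski-dense set of $t \in \P^1(K)$. In every case, the fiberwise Campana orbifold is $(C_t,(1-1/m)(L \cap C_t))$, and after identifying $C_t$ with $\P^1_K$ via a rational point it becomes $(\P^1_K,(1-1/m)H)$ with $H$ a collection of at most two rational points in general linear position.

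The fiberwise orbifold therefore falls into the scope of Theorem \ref{thm:bmo}(i), applied with $n = 1$ and $r \leq 1$, and so satisfies Campana weak approximation. Combined with the existence of Campana points on the fiber model (e.g.\ coprime pairs of $m$-full integers), Theorem \ref{cwwathm} yields the Campana Hilbert property on each such fiber. The base $\P^1_K$ satisfies the classical Hilbert property, so invoking the Campana fibration result transfers the Campana Hilbert property from this non-thin family of fibers to the total space $(\calX,\calD_m)$.

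The main obstacle lies in the rigorous verification of the Campana fibration method's hypotheses in this geometric setting: one must check that the set of $t \in \P^1(K)$ corresponding to singular fibers of $\pi$, fibers where $L \cap C_t$ is non-reduced, or primes of bad reduction for the fiber model forms a thin subset of $\P^1(K)$, and that the integral model structure is preserved under restriction so that fiberwise Campana points genuinely lift to Campana points of $(\calX,\calD_m)$ outside a thin exceptional set. A secondary technical point is to handle, in cases (a) and (c), the possibility that $L \cdot F = 0$ for every available conic fibration; here one either invokes the existence of an alternative conic fibration with $L \cdot F \geq 1$ or else argues directly that the Campana structure on fibers becomes trivial and reduces to the classical Hilbert property for smooth conics with a rational section.
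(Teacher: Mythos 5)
Your overall strategy (fiber via a conic fibration, use $L$ as a multisection to give the orbifold divisor on each fiber, apply a Campana version of the Bary-Soroker--Fehm--Petersen fibration result) is correct in outline, and your identification of the conic fibration coming from $-K_X-L$ in case (b) matches the paper. However, there are two genuine gaps.

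First, the reduction to Theorem \ref{thm:bmo}(i) does not work. When $L\cdot C=2$ (which is the generic situation: in case (b) one has $L\cdot(-K_X-L)=2$), the restriction $\pi|_L\colon L\to\P^1$ is a finite map of degree $2$, so the set of $t\in\P^1(K)$ for which $L\cap C_t$ splits into two rational points is the image of $L(K)$ under a degree-$2$ cover and is therefore a type II \emph{thin} set. You cannot restrict attention to such fibers when invoking a fibration theorem whose hypothesis requires a non-thin set of good base points. For a generic $t$, the fiberwise orbifold is $(\P^1,(1-1/m)P)$ with $P$ a \emph{degree-$2$ closed point}, which is outside the scope of Theorem \ref{thm:bmo} entirely (it treats hyperplanes over $K$, not points defined over a quadratic extension). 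This is precisely why the paper devotes Proposition \ref{prop:deg2P1} to proving Campana weak approximation for $(\P^1,(1-1/m)P)$ with $P$ a quadratic point; your proposal never engages with this case, and the claim that ``a Zariski-dense set of $t$ has at least one rational point in $L\cap C_t$'' is not sufficient, since Zariski-dense does not imply not thin.

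Second, you have not addressed how to show that the set of base points $t\in\P^1(K)$ over which the fiber has non-thin Campana points is itself not thin -- this is the actual hypothesis of Theorem \ref{thm:bsfp}, and it does not follow from the Hilbert property of $\P^1$ alone. The paper's argument uses the \emph{double} conic bundle structure on a degree-$2$ del Pezzo surface (classes $C$ and $-2K_X-C$ give two fibrations $\pi_1,\pi_2$ with fibers meeting transversally), reducing cases (a) and (b) to case (c) first by blowing up away from $D_{\red}$ via Lemma \ref{lem:blowup}, and then, given any type II thin set $B\subset\P^1(K)$, exhibiting a fiber $C_2$ of $\pi_2$ carrying non-thin Campana points whose $\pi_1$-image escapes $B$. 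Neither the blowup reduction nor the double conic bundle argument appears in your proposal, and without them the non-thinness of the good base locus is unjustified.
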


The analogous result for rational points was proved in \cite[Thm.~1.4]{STR}. To prove Theorem \ref{thm:dp}, we borrow ideas from \cite{STR} by using the induced double conic bundle structure. We first establish a lemma relating non-thin sets of Campana points and blowing up. This allows us to reduce, as in \emph{ibid.}, to case (iii), with extra difficulty coming from the fact that we must consider Campana points on $\P^1$ with a degree-$2$ point as the orbifold divisor. Hence, the proof relies on the application of a crucial result (Proposition \ref{prop:deg2P1}) establishing Campana Hilbert property for such orbifolds in Section \ref{sec:proj}.
\subsection{Conventions}
\subsubsection*{Number theory}
We denote by $\Omega_K$ the set of places of a number field $K$. Given $v \in \Omega_K$, we denote the completion of $K$ at $v$ by $K_v$ and choose the absolute value $|\cdot|_v$ on $K_v$ given by $|x|_v = |N_{K_v/\mathbb{Q}_p}\left(x\right)|_p$, where $|\cdot|_p$ is the usual absolute value on $\mathbb{Q}_p$ and $v$ lies over $p \in \Omega_\mathbb{Q}$. For $v$ non-archimedean, we denote by $v: K \rightarrow \mathbb{Z} \cup \{\infty\}$ the associated discrete valuation, and by $\calO_v$ the $v$-adic integers. We denote by $\pi_v$ and $\F_v$ a uniformiser and the residue field for $\mathcal{O}_v$ respectively, and we write $q_v$ for the size of $\F_v$. When working over a number field $K$, we denote by $S_{\infty}$ the set of all archimedean places. Given a finite set $S \subset \Omega_K$ containing $S_{\infty}$, we denote by $\calO_S = \{\alpha \in K: v\left(\alpha\right) \geq 0 \textrm{ for all } v\not\in S\}$ the ring of $S$-integers of $K$. We write $\mathcal{O}_K = \mathcal{O}_{S_{\infty}}$.
\subsubsection*{Geometry}
We denote by $\A_R^n$ and $\P_R^n$ the affine and projective $n$-spaces over a ring $R$ respectively, omitting the subscript if $R$ is clear. A variety over a field $F$ is a geometrically integral separated scheme of finite type over $F$. A curve is a variety of dimension $1$, and a surface is a variety of dimension $2$. Given a variety $X$ over $F$ and a field extension $E/F$, we write $X_E = X \times_{\Spec F} \Spec E$; when $F$ is a number field $K$ and $v \in \Omega_K$, we write $X_v = X \times_{\Spec K} \Spec K_v$. Given finite sets of places $S \subset T \subset \Omega_K$ and a scheme $\calX$ over $\Spec \O_S$, we write $\calX_T = \calX \times_{\Spec \O_S} \Spec \O_T$. Given a scheme $\calX$ over $\O_S$ and a place $v \not\in S$, we write $\calX_v = \calX \times_{\Spec \O_S} \Spec \O_v$.

We use the following definitions throughout the paper.
\begin{mydef}
Let $X$ be a smooth projective surface. A \emph{conic fibration} of $X$ is a morphism $\pi \colon X \rightarrow \P^1$ such that all fibres of $\pi$ are isomorphic to plane conics.
\end{mydef}
\begin{mydef}
Let $X$ be a del Pezzo surface. A \emph{line} on $X$ is a curve $E \subset X$ (necessarily irreducible) such that $E^2 = E \cdot K_X = -1$.
\end{mydef}
\begin{mydef} \label{def:glp}
Given a hyperplane $H : \sum_{i=0}^n a_i x_i = 0$ in $\mathbb{P}^n$ over a field $K$, we call the point $[a_0:\dots:a_n] \in \mathbb{P}^n(K)$ the \emph{normal} to $H$. We say that the hyperplanes $H_0,\dots,H_r$ are in \emph{general linear position} if their normals are in general linear position, i.e.\ no $k$ of them lie in a $\left(k-2\right)$-dimensional linear subspace for $k=2,3,\dots,n+1$.
\end{mydef}
\subsection*{Acknowledgements}
We would like to thank Tim Browning, Daniel Loughran, Harkaran Uppal and the authors of \cite{PSTVA} for many useful comments and discussions. In particular, we would like to thank Sho Tanimoto for suggesting weak weak approximation for verifying the Hilbert property for Campana points and Marta Pieropan for her comments on an earlier draft. We also thank Trevor Wooley for pointing out the improvements for weak approximation on diagonal hypersurfaces used in Corollary~\ref{cor:cwa}, and we thank David Bourqui and Gregory Sankaran for corrections suggested when the paper appeared in the second-named author's PhD thesis. Lastly, we thank the anonymous referee for many thoughtful suggestions which have vastly improved the exposition. The first-named author was supported by EPSRC grant EP/R021422/2, while the second-named author was supported by an EPSRC PhD studentship and the Heilbronn Institute for Mathematical Research.
\section{CWWA and CHP}
In this section we define Campana orbifolds as well as local, rational and adelic Campana points.
We then define Campana weak weak approximation and the Campana Hilbert property, and we prove Theorem \ref{cwwathm}.
\subsection{Campana points} \label{subsectionCP}
The concepts of Campana orbifolds and orbifold rational points come from Campana's theory of ``orbifoldes g\'eom\'etriques'' (see e.g.\ \cite{CAM}) and are also developed in \cite{ABR}. We employ the definitions given in \cite{STR2}.
\begin{mydef} \label{def:CO}
A \emph{Campana orbifold} over a field $F$ is a pair $\left(X,D\right)$ consisting of a proper, normal variety $X$ over $F$ and an effective Cartier $\Q$-divisor
$$
D = \sum_{\alpha \in \calA} \epsilon_{\alpha} D_{\alpha}
$$
on $X$, where the $D_{\alpha}$ are prime divisors and $\epsilon_{\alpha} = 1 - \frac{1}{m_{\alpha}}$ for some $m_{\alpha} \in \Z_{\geq 2} \cup \{\infty\}$, taking $\epsilon_\alpha = 1$ when $m_\alpha = \infty$.
We define the \emph{support} of the $\Q$-divisor $D$ to be
$$
D_{\textrm{red}} = \sum_{\alpha \in \calA} D_{\alpha}.
$$
\end{mydef}
Let $\left(X,D\right)$ be a Campana orbifold over a number field $K$, and let $S$ be a finite set of places of $K$ containing $S_{\infty}$.
\begin{mydef}
A \emph{model} of $\left(X,D\right)$ over $\calO_S$ is a pair $\left(\calX,\calD\right)$, where $\calX$ is a flat proper model of $X$ over $\calO_S$ (i.e.\ a flat proper $\calO_S$-scheme together with an isomorphism $\calX_{\left(0\right)} \xrightarrow{\sim} X$), $\calD_{\alpha}$ is the Zariski closure of $D_{\alpha}$ in $\calX$ and $\calD = \sum_{\alpha \in \calA}\epsilon_{\alpha} \calD_{\alpha}$.
For a place $v \not\in S$, we denote by $\calD_{\alpha_v}$, $\alpha_v \in \calA_v$ the irreducible components of $\calD_{\textrm{red}}$ over $\Spec \calO_v$, and we write $\alpha_v \mid \alpha$ if $\calD_{\alpha_v} \subset \calD_{\alpha}$.
\end{mydef}
Let $\left(\calX,\calD\right)$ be a model for $\left(X,D\right)$ over $\calO_S$. For $v \not\in S$ a place of $K$, a point $P \in X\left(K_v\right)$ induces a point $\calP_v \in \calX\left(\calO_v\right)$ by the valuative criterion of properness \cite[Thm.~II.4.7]{HART}.
\begin{mydef}
Let $P \in X\left(K\right)$ and take a place $v \not\in S$. For each $\alpha_v \in \mathcal{A}_v$, we define the \emph{local intersection multiplicity} $n_v\left(\mathcal{D}_{\alpha_v},P\right)$ of $\mathcal{D}_{\alpha_v}$ and $P$ at $v$ to be $+\infty$ if $P\in D_{\alpha_v}$ and the colength of the $\O_v$-ideal corresponding to the closed subscheme $\mathcal{P}_v^*\mathcal{D}_{\alpha_v}$ of $\Spec \mathcal{O}_v$ otherwise.
\end{mydef}
\begin{mydef}\label{def:CP}
Let $v \not\in S$. We say that $P \in X\left(K_v\right)$ is a \emph{Campana $\calO_v$-point} of $\left(\calX,\calD\right)$ if the following implications hold for all $\alpha \in \calA$.
\begin{enumerate}[label=(\roman*)]
\item If $\epsilon_{\alpha} = 1$ (meaning $m_{\alpha} = \infty$), then $n_v\left(\calD_{\alpha_v},P\right) = 0$ for all $\alpha_v\mid \alpha$.
\item If $\epsilon_\alpha \neq 1$ and $n_v\left(\calD_{\alpha_v},P\right) > 0$ for some $\alpha_v \mid \alpha$, then
$$
n_v\left(\calD_{\alpha_v},P\right) \geq \frac{1}{1-\epsilon_{\alpha}},
\textrm{ i.e. }
n_v\left(\calD_{\alpha_v},P\right) \geq m_{\alpha}.
$$
\end{enumerate}
We denote the set of Campana $\calO_v$-points of $\left(\calX,\calD\right)$ by $\left(\calX,\calD\right)\left(\calO_v\right)$.
We say that $P \in X\left(K\right)$ is a \emph{Campana $\calO_S$-point} of $\left(\calX,\calD\right)$ if its image in $X\left(K_v\right)$ belongs to $\left(\calX,\calD\right)\left(\calO_v\right)$ for all $v \not\in S$.
We set $\left(\calX,\calD\right)\left(\calO_{v'}\right) = X\left(K_{v'}\right)$ for any place $v' \in S$.

For any finite set of places $T \subset \Omega_K$, we define the set of \emph{Campana $\A_K^T$-points} of $(\calX,\calD)$ to be $(\calX,\calD)(\A_K^T) = \prod_{v \not\in T}(\calX,\calD)(\calO_v)$, i.e.\
$$
(\calX,\calD)(\A_K^T) = \prod_{v \not\in S \cup T}(\calX,\calD)(\calO_v) \times \prod_{v \in S \setminus T}X\left(K_v\right).
$$
We define the set of \emph{adelic Campana points} of $(\calX,\calD)$ to be
$$
(\calX,\calD)(\A_K) = \prod_{v \not\in S}(\calX,\calD)(\calO_v) \times \prod_{v \in S}X\left(K_v\right).
$$
Both $(\calX,\calD)(\A^T_K)$ and $(\calX,\calD)(\A_K)$ are equipped with the subspace topology inherited from $X(\A^T_K)$ and $X(\A_K)$.
\end{mydef}
\begin{note}\label{note:open}
As observed in \cite[\S6.2]{PSTVA}, intersection multiplicity is locally constant on $\left(X \setminus D\right)\left(K_v\right)$ for all places $v \not\in S$, whence it follows that $\left(\calX,\calD\right)\left(\O_v\right)$ is both closed and open in $X\left(K_v\right)$ for all $v \in \Omega_K$.
\end{note}
\begin{remark} \label{rem:ratintcamp}
The rational points of $X$ are precisely the Campana points when $D$ is the zero divisor, and integral points (with respect to $D$) correspond to every coefficient of the orbifold divisor being $1$, thus are those points whose induced $\O_S$-point ``avoids'' $\calD_{\red}$. The Campana points are the rational points $P \in X \left(K\right)$ which either avoid the ($v$-adic) irreducible components of the $\calD_\alpha$ or meet them in high multiplicity.
\end{remark}
\begin{mydef} \label{def:CWWA}
Let $\left(X,D\right)$ be a Campana orbifold over $K$ with $\O_S$-model $\left(\calX,\calD\right)$.
\begin{enumerate}[label=(\roman*)]
\item We say that $\left(\calX,\calD\right)$ satisfies \emph{Campana weak weak approximation (CWWA)} if there exists a finite set $T$ of places of $K$ such that $(\calX,\calD)(\O_{S})$ is dense in $\left(\calX,\calD\right)\left(\A_K^T\right)$.
\item We say that $(\calX,\calD)$ satisfies \emph{Campana weak approximation (CWA)} if the set $(\calX,\calD)(\O_S)$ is dense in $(\calX,\calD)(\A_K)$.
\end{enumerate}
\end{mydef}
Observe that CWA implies CWWA (taking $T = \emptyset$). Note also that one recovers weak approximation for $X$ and strong approximation for $X \setminus D_{\red}$ from CWA upon taking $D = 0$ and $D = D_{\red}$ respectively. The following lemma tells us that, if CWA is preserved upon enlarging $S$, then it is independent of the choice of model.
\begin{lemma}\label{lem:twoModels}
Let $K$ be a number field and let $S,T$ be finite subsets of places of $K$ where $S$ contains $S_{\infty}$. Let $(X,D)$ be a Campana orbifold over $K$ with $\calO_S$-models $(\calX,\calD)$ and $(\calY,\calE)$. Suppose that for all finite subsets $S\subset S'\subset \Omega_K$, the set $(\calX_{S'},\calD_{S'})(\O_{S'})$ is dense in $(\calX_{S'},\calD_{S'})(\A^{T}_K)$. Then $(\calY,\calE)(\O_S)$ is dense in $(\calY,\calE)(\A^T_K)$.
\end{lemma}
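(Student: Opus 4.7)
The plan is to reduce the density statement for $(\calY,\calE)$ to the hypothesised one for a suitable base change of $(\calX,\calD)$. The key observation is that, after inverting finitely many extra primes, the two models become isomorphic as orbifold models, and then the two sets of local Campana points coincide off that finite set.

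\emph{Step 1 (Spreading out).} Both $\calX$ and $\calY$ are flat proper $\calO_S$-models of $X$, so their generic fibres are canonically identified with $X$. A standard limit argument produces a finite set $S \subset S' \subset \Val(K)$ and an isomorphism $\varphi\colon \calX_{S'} \xrightarrow{\sim} \calY_{S'}$ of $\calO_{S'}$-schemes whose restriction to the generic fibre is the identity of $X$. Because $\calD_{S'}$ and $\calE_{S'}$ are the Zariski closures of the same divisor $D \subset X$ in $\calX_{S'}$ and $\calY_{S'}$, the isomorphism $\varphi$ identifies them component by component. Consequently, for every place $v \notin S'$, local intersection multiplicities along these divisors agree, and hence $(\calX,\calD)(\calO_v) = (\calY,\calE)(\calO_v)$ as subsets of $X(K_v)$.

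\emph{Step 2 (Transporting the open set and concluding).} Let $W = \prod_{v \notin T} W_v$ be a basic open neighbourhood in $(\calY,\calE)(\A_K^T)$, so $W_v = (\calY,\calE)(\calO_v)$ for all but finitely many $v$. By Note~\ref{note:open}, every $(\calY,\calE)(\calO_v)$ is open in $X(K_v)$; in particular each $W_v$ with $v \in S' \setminus S$ is open in $X(K_v)$. Hence $W$ may be regarded as a basic open subset $\widetilde W$ of $(\calX_{S'}, \calD_{S'})(\A_K^T)$, recalling that at places $v \in S'$ the latter adelic space records $X(K_v)$ in its factors. Applying the hypothesis with $S'$ produces $Q \in (\calX_{S'}, \calD_{S'})(\calO_{S'})$ whose image in $X(K_v)$ lies in $\widetilde W_v$ for every $v \notin T$. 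For $v \notin S'$, Step~1 gives $Q \in W_v \subset (\calY,\calE)(\calO_v)$; for $v \in S' \setminus S$, the inclusion $Q \in W_v \subset (\calY,\calE)(\calO_v)$ is built into the choice of $\widetilde W$. Thus $Q \in (\calY,\calE)(\calO_S)$, and $Q \in W$ by construction.

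The only step requiring real work is Step~1: producing, for any two flat proper finite-type models over $\calO_S$ with the same generic fibre, a finite set of places beyond which they are isomorphic together with their orbifold divisors. This is a routine spreading-out argument, and the divisor compatibility is automatic because $\calD_{S'}$ and $\calE_{S'}$ are both defined as Zariski closures of $D$. The rest of the argument is bookkeeping with the various adelic subspaces and the local openness provided by Note~\ref{note:open}.
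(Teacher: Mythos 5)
Your argument matches the paper's: both spread out to a finite $S'$ where the two models (and their orbifold divisors) become isomorphic, note via Note~\ref{note:open} that the local Campana conditions cut out open sets so that the $(\calY,\calE)$-adelic space sits openly inside the $(\calX_{S'},\calD_{S'})$-adelic space, and then invoke the density hypothesis at level $S'$ to produce the required rational Campana point. Your Step~2 makes the transport of open sets slightly more explicit, but it is the same reduction as in the paper.
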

\begin{proof}
By spreading out (see \cite[\S3.2]{POO}), there exists a finite set of places $S'$ containing $S$ such that
$$(\calX_{S'},\calD_{S'})\isom(\calY_{S'},\calE_{S'})$$
over $\Spec\O_{S'}$. The hypothesis on $(\calX,\calD)$ and the above isomorphism imply that $(\calY_{S'},\calE_{S'})(\O_{S'})$ is dense in $(\calY_{S'},\calE_{S'})(\A_K^T)$. Since the local Campana points are open at each place (see Note \ref{note:open}), $(\calY,\calE)(\A^T_K)$ is open inside $(\calY_{S'},\calE_{S'})(\A^T_K)$. Hence, we see that $(\calY,\calE)(\O_S)=(\calY_{S'},\calE_{S'})(\O_{S'})\cap (\calY,\calE)(\A^T_K)$ is dense in $(\calY,\calE)(\A^T_K)$.
\end{proof}
\subsection{Hilbert property}
In this section we introduce thin sets and the Hilbert property (an excellent reference for which is \cite{SER}) and we prove Theorem \ref{cwwathm}. As mentioned in the introduction, the motivation for studying thin sets of Campana points is twofold: to understand from a geometric perspective their ubiquity, and to better understand the exceptional set in \cite[Conj.~1.1]{PSTVA}. An assumption forced by this conjecture (see \cite[\S3.4]{PSTVA}) is that the set of Campana points itself (on the orbifold of interest) is not thin. However, little is known regarding thin sets of Campana points and integral points aside from the result of Browning and Yamagishi \cite[Thm.~1.3]{BY} mentioned earlier and work of Coccia on integral points on affine cubic surfaces \cite{C}.

Let $V$ be a variety defined over a field $F$ of characteristic zero, and let $A \subset V\left(F\right)$.
\begin{mydef}\label{def:HP}
We say that $A$ is of \emph{type I} if $A \subset W\left(F\right)$ for some proper closed subset $W$ of $V$, i.e.\ $A$ is not dense in $V$.

We say that $A$ is of \emph{type II} if $A \subset \phi\left(V'\left(F\right)\right)$ for $V'$ a variety and $\phi \colon V' \rightarrow V$ a generically finite dominant morphism of degree $\geq 2$.

We say that $A$ is \emph{thin} if it is contained in a finite union of subsets of $V(F)$ of types I and II.
\end{mydef}
\begin{note}
One may assume without loss of generality in Definition \ref{def:HP} that $V'$ is normal and $\pi$ is finite.
\end{note}
We say that $V$ satisfies the \emph{Hilbert property} if $V(F)$ is not thin. Motivated by this, we define the following analogue for Campana points. Let $\left(X,D\right)$ be a Campana orbifold over a number field $K$ with $\calO_S$-model $\left(\calX,\calD\right)$ for some finite set of places $S$ of $K$ containing $S_{\infty}$.
\begin{mydef} \label{def:CHP}
We say that $\left(X,D\right)$ satisfies the \emph{Campana Hilbert property (CHP)} if the set $\left(\calX,\calD\right)\left(\calO_S\right) \subset X\left(K\right)$ is not thin.
\end{mydef}
\subsection{Proof of Theorem \ref{cwwathm}}
We now prove that CWWA implies CHP, provided that the set of Campana points is non-empty. The proof is a direct adaptation of Serre's proof of a theorem (namely \cite[Thm.~3.5.3]{SER}) which plays a crucial step in the case of rational points.
\begin{proof} [Proof of Theorem \ref{cwwathm}]
The statement of the thoerem is equivalent to saying if $(\calX,\calD)$ does not have the Campana Hilbert property and $(\calX,\calD)(\calO_S)\neq\emptyset$, then it does not satisfy Campana weak weak approximation. We will prove a stronger version of this statement: if $A \subset \left(\calX,\calD\right)\left(\calO_S\right)$ is thin and $T_0 \subset \Omega_K$ is finite, then there is another finite subset $T \subset \Omega_K$ disjoint with $T_0$ such that $A$ is not dense in $\prod_{v \in T}\left(\calX,\calD\right)\left(\calO_v\right)$. Indeed, the original statement of the theorem is the case where $A=(\calX,\calD)(\O_S)$. Note that, if the stronger statement is true for the thin subsets $A_1$ and $A_2$, then it is also true for $A_1 \cup A_2$ (cf.\ \cite[Proof~of~Thm.~3.5.3]{SER}). Then it suffices to prove it when $A$ is type I and when $A$ is type II. Denote by $Y$ the proper closed subset of singular points of $X$.

Let $A \subset W\left(K\right) \cap \left(\calX,\calD\right)\left(\calO_S\right)$ for $W \subset X$ a proper closed subset of $X$. Denote by $\calW$ and $\calY$ the Zariski closures in $\calX$ of $W$ and $Y$ respectively. Since $W\left(K_v\right) \subset X\left(K_v\right)$ is closed, it suffices to find $v \not\in T_0$ such that $\left(\calX,\calD\right)\left(\calO_v\right) \setminus W\left(K_v\right) \neq \emptyset$. Taking $q_v$ sufficiently large, applying the Lang--Weil estimate \cite[Thm.~3.6.1]{SER} (which we may do as $X$ is normal with a rational point, hence geometrically irreducible) and generic smoothness of $X \setminus Y$ shows that there exists a smooth point $Q \in \calX_v\left(\F_v\right) \setminus \left(\calD_v\left(\F_v\right) \cup \calW_v\left(\F_v\right) \cup \calY_v\left(\F_v\right)\right)$, so $Q$ lifts to a point $P \in \left(\calX\setminus\calD\right)\left(\calO_v\right) \setminus W\left(K_v\right) \subset \left(\calX,\calD\right)\left(\calO_v\right) \setminus W\left(K_v\right)$.

Now let $A \subset \pi\left(W\left(K\right)\right) \cap \left(\calX,\calD\right)\left(\calO_S\right)$ for $W$ a normal $K$-variety and $\pi \colon W \rightarrow X$ a finite morphism of degree $ \geq 2$. Since $\pi$ is finite, we deduce that $\pi\left(W\left(K_v\right)\right) \subset X\left(K_v\right)$ is closed, so it suffices to find $v \in \Omega_K$ with $\left(\calX,\calD\right)\left(\calO_v\right) \setminus \pi\left(W\left(K_v\right)\right) \neq \emptyset$. By combining \cite[Thm.~3.6.2]{SER} and another application of the Lang--Weil estimate along with generic smoothness of $X \setminus Y$, there exists a smooth point $Q \in \calX_v\left(\F_v\right) \setminus \left(\calD_v\left(\F_v\right) \cup \pi\left(\calW_v\left(\F_v\right)\right) \cup \calY_v\left(\F_v\right)\right)$ for all $v$ totally split in the algebraic closure of $K$ in the Galois closure $K\left(W\right)^{\textrm{gal}}/K\left(V\right)$ with $q_v$ sufficiently large. Taking such $v$ and $Q$, we may lift $Q$ to a point $P \in \left(\calX\setminus\calD\right)\left(\calO_v\right) \setminus \pi\left(W\left(K_v\right)\right) \subset \left(\calX,\calD\right)\left(\calO_v\right) \setminus \pi\left(W\left(K_v\right)\right)$.
\end{proof}
\begin{remark}
The above proof actually gives a slightly stronger result: given a thin set $A \subset X\left(K\right)$ and a finite subset $T_0 \subset \Omega_K$, there exists a finite set of places $T \subset \Omega_K$ disjoint with $T_0$ such that the image of $A$ is not dense in $\prod_{v \in T}\left(\calX \setminus \calD\right)\left(\O_v\right)$. In particular, in order to verify that a Campana orbifold has CHP, it suffices to show that, for all finite sets of places $T_0 \supset S$, the closure of $\left(\calX,\calD\right)\left(\O_S\right)$ in $\prod_{v \not\in T_0}\left(\calX,\calD\right)\left(\O_v\right)$ contains $\prod_{v \not\in T_0}\left(\calX \setminus \calD\right)\left(\O_v\right)$.
\end{remark}
\section{Projective space}\label{sec:proj}
Let $K$ be a number field, and let $S \subset \Omega_K$ be a finite set containing $S_{\infty}$. In this section we consider approximation properties for Campana $\calO_S$-points on orbifolds $(\P^n,D)$, where
$$
D =\sum_{i=0}^r \left(1-\frac{1}{m_i}\right)D_i
$$
for $D_0,\dots,D_r$ hyperplanes in general linear position and $\{m_0,\dots,m_r\} \subset \mathbb{Z}_{\geq 2}$.
\subsection{Reduction to coordinate hyperplanes}\label{sec:reduction}
Assume $r\leq n+1$. Define the hyperplanes
$$
H_i : x_i=0\quad \textrm{for } 0 \leq i \leq n,\quad H_{n+1} \colon \sum_{i=0}^n x_i=0,
$$
and define the $\mathbb{Q}$-divisor
\begin{equation} \label{eqn:n+2}
H = \sum_{i=0}^r \left(1-\frac{1}{m_i}\right)H_i.
\end{equation}
Let $D_0,\ldots,D_r\subset\P^n$ denote arbitrary hyperplanes in general linear position. By applying a projective transformation, we obtain an isomorphism
$$
f\colon (\P^n,D) \xrightarrow{\sim} (\P^n,H).
$$
Denote by $\calD$ and $\mathcal{H}$ the closures of $D$ and $H$ in $\P_{\O_S}^n$ respectively. By Lemma \ref{lem:twoModels}, if $(\P^n_{\O_S},\calH)$ satisfies CWA (respectively, if $(\P^n_{\O_S},\calH)(\O_S)\subset (\P^n_{\O_S},\calH)(\A^T_K)$ is dense) for arbitrary $S$, then $(\P^n_{\O_S},\calD)$ also satisfies CWA (respectively, then $(\P^n_{\O_S},\calD)(\O_S)\subset (\P^n_{\O_S},\calD)(\A^T_K)$ is dense). Hence, when proving CWA or CWWA for arbitrary $S$, we can reduce to the case $D=H$. In this case, we have the following concrete description for the set of Campana points. Write $P = [x_0:\cdots:x_n]$ and set $x_{n+1}=\sum x_i$. For any $v\notin S$,
\begin{equation}\label{eqn:vadiccampana}
(\P^n_{\calO_S},\calH)(\O_v)=\Bigl\{P \in\P^n(K_v):v(x_i)-\min \{v(x_j)\} \in \Z_{\geq m_i} \cup \{0, \infty\}, 0 \leq i \leq r\Bigr\}
\end{equation}
and
\begin{equation}\label{eqn:campana}
(\P^n_{\calO_S},\calH)(\O_S)=\P^n(K)\cap\left(\prod_{v\notin S}(\P^n_{\O_S},\calH)(\O_v)\times\prod_{v\in S}\P^n(K_v)\right).
\end{equation}
\subsection{Local solubility}
Before investigating CWA, let us consider the question of local solubility for Campana points on $\left(\P^n,D\right)$, i.e.\ whether $(\P^n_{\O_S},\calD)(\A_K)$ is non-empty. If there are no adelic Campana points, then CWA holds trivially, but CHP clearly fails. In fact, this question is more delicate than one might expect. Let us give two illustrative examples.
\begin{example}
Suppose $D = H$, with $H$ as in \eqref{eqn:n+2}. Then it is clear that $[1,\ldots,1]\in\P^n(K)$ is a Campana $\O_K$-point according to \eqref{eqn:vadiccampana} and \eqref{eqn:campana}.
\end{example}
\begin{example} \label{ex:ncp}
Consider the hyperplanes $D_i\subset\P^5_\Q$ defined by $f_i=0$, where
\begin{equation*}
\begin{aligned}
f_0& =x_0+2x_1+4g_0, \quad & f_1 &=5x_0+4x_1+4g_1, \\
f_2& =2x_0+x_1+4g_2,\quad & f_3 &=4x_0+5x_1+4g_3, \\
f_4& =x_0+x_1+4g_4,\quad & f_5 &=x_0+3x_1+4g_5,
\end{aligned}
\end{equation*}
and $g_i\in\Z[x_2,x_3,x_4,x_5]$ are linear forms chosen so that the $f_i$ are linearly independent. Set $m_i = m \geq 2$ for each $i=0,\dots,r$. Since there is no $\mathbb{Q}_2$-point lying on every $D_i$, we may choose $m$ large enough so that $f_0=\cdots=f_5=0$ has no solution in $\P^5(\Z/2^m\Z)$. Choosing such $m$, we will show that $\left(\P^5_{\Z},\calD\right)\left(\Z_2\right) = \emptyset$.

Suppose that $P = [a_0:\dots:a_5] \in \left(\P^5_{\Z},\calD\right)\left(\Z_2\right)$, with $a_0,\dots,a_5$ coprime $2$-adic integers. Note that modulo $2$, each hyperplane $D_i$ reduces to one of the following:
$$
x_0=0,\quad x_1=0,\quad x_0+x_1=0.
$$
Clearly any point in $\P^5\left(\F_2\right)$ must lie in one of the three hyperplanes above. Let $P_2$ denote the reduction of $P$ modulo $2$. Suppose that $P_2$ lies on $x_0=0$, i.e.\ that $2 \mid a_0$. According to the equations, this means $n_2(\calD_0,P),n_2(\calD_1,P)>0$. Since $P \in \left(\P^5_{\Z},\calD\right)\left(\Z_2\right)$, we must then have $n_2(\calD_0,P),n_2(\calD_1,P) \geq m \geq 2$. Since $f_1-f_0 \equiv 2x_1\pmod 4$, this means that $2 \mid a_1$ as well, i.e.\ $P_2$ lies on $x_1 = 0$, so $n_2\left(\calD_2,P\right), n_2\left(\calD_3,P\right) > 0$. Since $P_2$ lies on $x_0 = 0$ and $x_1 = 0$, it also lies on $x_0 + x_1$, hence $n_2\left(\calD_4,P\right), n_2\left(\calD_5,P\right) > 0$. We conclude that $n_2(\calD_i,P)>0$ for all $i$, hence we must have $n_2\left(\calD_i,P\right) \geq m$ for all $i$. However, since no point in $\P^5(\Z_2)$ lies on all hyperplanes $D_i$ modulo $2^m$, this is impossible. We conclude that $P$ is not a Campana point. The same argument works if $P_2$ were to lie on either $x_1=0$ or $x_0+x_1=0$, and so we conclude that $\left(\P^5_{\Z},\calD\right)\left(\Z_2\right) = \emptyset$.
\end{example}
Example \ref{ex:ncp} shows how things can go wrong when $q_v$ is small compared to $n$. We now give some sufficient conditions for the existence of a $v$-adic Campana point.
\begin{corollary}
Let $D_0,\dots,D_r\subset \P^n$ be hyperplanes in general linear position. Then $(\P^n_{\O_S},\calD)(\O_v)\neq\emptyset$ if one of the following conditions is satisfied:
\begin{enumerate}[label=(\roman*)]
\item $r<n$;
\item The reductions $(\calD_0)_{\F_v},\ldots,(\calD_r)_{\F_v}$ are linearly independent;
\item $q_v \geq n$.
\end{enumerate}
\end{corollary}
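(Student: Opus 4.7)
The plan is to construct, in each case, a Campana $\calO_v$-point $P$ of $(\P^n_{\calO_S},\calD)$. Write $\ell_i\in\calO_v[x_0,\ldots,x_n]$ for a primitive linear form defining $\calD_i$ over $\calO_v$; the Campana condition on a primitive representative of $P$ becomes, for each $i$, that $v(\ell_i(P))\in\{0\}\cup[m_i,\infty]$.

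Case~(1) is handled globally. Since $r+1\le n$ and the $D_i$ are in general linear position, their normals are $K$-linearly independent, so $Y:=\bigcap_{i=0}^r D_i\cong\P^{n-r-1}_K$ has $K$-rational points; any $P\in Y(K)$ lies on every $D_i$, giving $n_v(\calD_i,P)=\infty\ge m_i$ at every place. Case~(2) is handled by a change of coordinates over $\calO_v$: $\F_v$-linear independence of $\bar\ell_0,\ldots,\bar\ell_r$ lets us extend to an $\F_v$-basis of $(\F_v^{n+1})^*$ and lift, via Nakayama's lemma, to a matrix $A\in GL_{n+1}(\calO_v)$ sending each $D_i$ to a coordinate hyperplane $\{y_i=0\}$; then $P:=A^{-1}([1{:}\cdots{:}1])$ satisfies $\ell_i(P)=1\in\calO_v^\times$, and primitivity of $P$ follows from invertibility of $A$ mod $\pi_v$, so $n_v(\calD_i,P)=0$.

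Case~(3) is the main obstacle, because the reductions $(\calD_i)_v$ need not be in general linear position and may in fact cover all of $\P^n(\F_v)$ (for instance, forming a pencil of hyperplanes through a codimension-$2$ subspace, in the spirit of Example~\ref{ex:ncp}), so the direct avoidance strategy fails. The key step is to locate $\bar P\in\P^n(\F_v)$ meeting at most one of the $(\calD_i)_v$ by double-counting: were every $\bar P\in\P^n(\F_v)$ to lie on at least two $(\calD_i)_v$, summing incidences would give
\begin{equation*}
2\,|\P^n(\F_v)|\;\le\;\sum_{i=0}^r|(\calD_i)_v(\F_v)|\;\le\;(r+1)\,|\P^{n-1}(\F_v)|.
\end{equation*}
Using the identity $|\P^n(\F_v)|/|\P^{n-1}(\F_v)|=q_v+(q_v-1)/(q_v^n-1)>q_v\ge n$ together with $r+1\le n+2$ contradicts this (the boundary case $n=2$, $q_v=2$, $r+1=4$ reads $14>12$). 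Hence some $\bar P$ has $I(\bar P):=\{i:\bar P\in(\calD_i)_v\}$ of size $\le 1$.

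If $I(\bar P)=\emptyset$, any Hensel lift $P$ of $\bar P$ (available by smoothness of $\P^n_{\calO_v}$) gives $n_v(\calD_i,P)=0$ for every $i$. If $I(\bar P)=\{i_0\}$, work in an affine chart $\{x_k=1\}$ in which $\bar P$ has unit $k$-th coordinate. Primitivity of $\ell_{i_0}$ together with $\bar P\in(\calD_{i_0})_v$ forces some coefficient $a_j$ of $\ell_{i_0}$ with $j\neq k$ to be a unit, since otherwise the only unit coefficient would be $a_k$, contradicting $\bar\ell_{i_0}(\bar P)=0$. Varying the $j$-th affine coordinate of the lift then solves an $\calO_v$-linear equation with invertible leading coefficient to arrange $v(\ell_{i_0}(P))\ge m_{i_0}$ (indeed, one may take $P\in D_{i_0}$), while the open conditions $\bar\ell_i(\bar P)\neq 0$ for $i\neq i_0$ persist under the perturbation, giving $n_v(\calD_i,P)=0$ for such $i$. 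The heart of the argument is case~(3): the pencil degenerations of the reductions force the use of higher-order vanishing on a single $\calD_{i_0}$ rather than simple avoidance of all reductions.
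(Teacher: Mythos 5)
Your proof is correct, and while case~(1) is identical to the paper's, cases~(2) and (3) take genuinely different routes. In case~(2) the paper picks $P \in D_0 \cap \cdots \cap D_{n-1}(K_v)$ and uses $\F_v$-linear independence of the reductions to conclude that $\bar P$ lies off $(\calD_n)_v$, yielding $n_v(\calD_i,P) = \infty$ for $i<n$ and $0$ for $i=n$; your normalisation to coordinate hyperplanes over $\O_v$ followed by $[1:\cdots:1]$ achieves the same end with $n_v(\calD_i,P)=0$ throughout. In case~(3) the difference matters more. The paper fixes $\calD_0$ and uses the bound
\[
\bigl|\calD_0\cap(\calD_1\cup\cdots\cup\calD_n)(\F_v)\bigr|\leq n\,\frac{q_v^{n-1}-1}{q_v-1}
\]
to find a point of $\calD_0(\F_v)$ off all the other $(\calD_i)_v$. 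That bound tacitly assumes $(\calD_0)_v\neq(\calD_i)_v$ for $i>0$, which general linear position over $K$ does not guarantee after reduction (take $x_0=0$ and $x_0+\pi_v x_1=0$, whose normals are independent over $K$ but whose reductions coincide at $v$); if some $(\calD_i)_v$ equals $(\calD_0)_v$, the intersection is all of $\calD_0(\F_v)$ and the estimate fails. Your double-counting argument sidesteps this entirely: summing incidences shows some $\bar P\in\P^n(\F_v)$ meets at most one $(\calD_i)_v$ \emph{even when reductions coincide}, and the dichotomy $|I(\bar P)|\in\{0,1\}$ then produces a Campana lift in either case. For $|I(\bar P)|=1$ the perturbation you describe is more than is needed: Hensel-lifting $\bar P$ into $D_{i_0}(K_v)$ already gives $n_v(\calD_{i_0},P)=\infty$ and $n_v(\calD_j,P)=0$ for $j\neq i_0$, as you note parenthetically. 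In short, your case~(3) argument appears to be more careful than the one printed in the paper.
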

\begin{proof}
If $r<n$, then choose $P\in \left(D_0\cap\cdots\cap D_r\right)(K_v)$. Then $P\in(\P^n_{\O_S},\calD)(\O_v)$ by definition. For the rest of the proof, we assume that $r=n$.

If $(\calD_0)_{\F_v},\ldots,(\calD_n)_{\F_v}$ are linearly independent, then choose a $v$-adic point $P\in \left(D_0\cap\cdots\cap D_{n-1}\right)(K_v)$. By assumption, the reduction of $P$ modulo $v$ does not lie on $(\calD_n)_v$. Hence $n_v(\calD_i,P)=\infty$ if $i\leq n-1$ and $0$ if $i=n$. It follows that $P\in(\P^n_{\O_S},\calD)(\O_v)$.

Now assume $q_v\geq n$. If $\left(\mathcal{D}_0\right)_{\F_v} = \left(\mathcal{D}_i\right)_{\F_v}$ for some $i \geq 1$, then 
$$
|\mathbb{P}^n\left(\mathbb{F}_v\right)| = \frac{q_v^{n+1}-1}{q_v-1} > n\frac{q_v^n-1}{q_v-1} > |\left(\calD_1\cup\cdots\cup\calD_n\right)(\F_v)| = |\left(\calD_0\cup\cdots\cup\calD_n\right)(\F_v)|,
$$
hence there exists $P_v \in \mathbb{P}^n\left(\mathbb{F}_v\right) \setminus \bigcup_{i=0}^n \mathcal{D}_i\left(\mathbb{F}_v\right)$. Lifting to $P \in \P^n\left(K_v\right)$ via Hensel's lemma gives a local Campana point.
Otherwise, we have
$$
|\calD_0(\F_v)|=\frac{q_v^n-1}{q_v-1} > n\frac{q_v^{n-1}-1}{q_v-1} \geq |(\calD_0\cap(\calD_1\cup\cdots\cup\calD_n))(\F_v)|,
$$
hence there exists $P_v\in\calD_0(\F_v) \setminus \bigcup_{i=1}^n\calD_i\left(\mathbb{F}_v\right)$. Using Hensel's lemma, choose a lift $P\in D_0(K_v)$. Then $n_v(\calD_i,P)=\infty$ for $i=0$ and $0$ for all $i>0$. It follows that $P\in(\P^n_{\O_S},\calD)(\O_v)$.
\end{proof}
\begin{remark}
It would be of interest to determine easily checkable necessary conditions for the existence of local Campana points at a place.
\end{remark}
\subsection{Independent hyperplanes}
Let us now investigate CWA in the case $r \leq n$. 
\begin{proposition}\label{prop:projcampdense}
Let $D_0,\dots,D_r\subset \P^n$ be hyperplanes in general linear position. Then $(\P^n_{\calO_S},\calD)(\O_S)$ satisfies CWA.
\end{proposition}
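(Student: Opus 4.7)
By the reduction in Section~\ref{sec:reduction} I first assume $D_i = H_i : x_i = 0$ for $0 \leq i \leq r$, so $\calH = \sum_{i=0}^r (1-\tfrac{1}{m_i}) H_i$. Given an adelic Campana point $(P_v)_v$ and a finite $T \supset S$, I normalise each $P_v = [a_0^{(v)}:\cdots:a_n^{(v)}]$ so that $a_j^{(v)} \in \O_v$ and $\min_j v(a_j^{(v)}) = 0$ for $v \in T \setminus S$, in which case the Campana condition reads $v(a_i^{(v)}) \in \{0\} \cup \Z_{\geq m_i}$ for $0 \leq i \leq r$. The aim is to find a rational Campana point close to $P_v$ at each $v \in T$.

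The strategy is to reduce to a one-dimensional CWA sub-lemma: for each $m \geq 2$, the orbifold $(\A^1_{\O_S}, (1-\tfrac{1}{m})\{0\})$ satisfies CWA. I would prove this sub-lemma by starting from a Chinese-remainder approximant $c_0 \in \O_S$ and then adding a correction $dN$, where $N \in \O_S$ is supported on $T \setminus S$ (with $v(N) \geq M_v$ there) and $d \in \O_S$ is an $m$-full auxiliary element, obtained via Dirichlet's theorem on primes in arithmetic progressions, that is a unit at each of the finitely many bad places $v \notin T$ where $v(c_0) \in \{1,\ldots,m-1\}$, and highly divisible at each $v \in T$. A direct place-by-place check then verifies that $v(c_0 + dN) \in \{0\} \cup \Z_{\geq m} \cup \{\infty\}$ at every $v \notin S$, while the approximation at $v \in T$ is preserved.

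With the sub-lemma in hand, the case $r < n$ is completed as follows. After perturbing the adelic data off $H_{r+1}$ at places of $T$ (harmless since $H_{r+1}$ is not in $\calH$ and local Campana sets are open by Note~\ref{note:open}), I apply the sub-lemma to produce $b_i \in \O_S$ approximating $a_i^{(v)}/a_{r+1}^{(v)}$ at $v \in T$ with $v(b_i) \in \{0\} \cup \Z_{\geq m_i}$ outside $S$ for each $i \leq r$, apply ordinary weak approximation for $\A^1$ to choose $b_i \in \O_S$ for $i \in \{r+2,\ldots,n\}$, and set $b_{r+1} = 1$. The equality $v(b_{r+1}) = 0$ forces $\min_j v(b_j) = 0$ at every $v \notin S$, reducing the Campana condition for $[b_0:\cdots:b_n]$ to the coordinatewise conditions already arranged.

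The main obstacle is the case $r = n$, for which there is no \emph{a priori} free coordinate to anchor the minimum. I would handle it by building each $b_i$ as a product $\beta_i \gamma_i^{m_i}$, where $\beta_i \in \O_S$ encodes the valuation data at places of $T$ and $\gamma_i \in \O_S$ is supported on an auxiliary prime set $\Sigma_i$ chosen, again via Dirichlet's theorem, to be pairwise disjoint and disjoint from $T$. The disjointness ensures that at each $v \notin T$ at most one $v(b_i)$ is nonzero, so $\min_j v(b_j) = 0$ and the Campana condition once again reduces to the coordinatewise one. The delicate step is matching the $m_i$-th power residues of $a_i^{(v)}$ modulo $\pi_v^{M_v}$ required for approximation at $v \in T$, which is handled by incorporating auxiliary $S$-units into the construction of $\gamma_i$.
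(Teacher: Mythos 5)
Your overall plan (reduce to coordinate hyperplanes, then construct an explicit $S$-integral representative coordinate-by-coordinate by combining CRT with an auxiliary number-theoretic device) is in the same spirit as the paper's proof, but both branches of your argument have concrete gaps.

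\emph{The $r<n$ anchoring step does not work.} You scale so that $b_{r+1}=1$ and ask for $b_i\in\O_S$ approximating $a_i^{(v)}/a_{r+1}^{(v)}$ at $v\in T$. Perturbing $P_v$ off $H_{r+1}$ only guarantees $a_{r+1}^{(v)}\neq 0$; it does not, and in general cannot, make $a_{r+1}^{(v)}$ a $v$-adic unit (small perturbations do not change $v(a_{r+1}^{(v)})$ once that valuation is positive; for instance $P_v$ might be $[1,0,\dots,0]$ with a tiny perturbation in the $(r+1)$-th slot). When $v(a_{r+1}^{(v)})>0$, some ratio $a_i^{(v)}/a_{r+1}^{(v)}$ has negative valuation at $v\in T\setminus S$, so no $b_i\in\O_S$ can $v$-adically approximate it, and your sub-lemma on $(\A^1,(1-\tfrac1m)\{0\})$ cannot be applied as you state it. You would either have to allow $b_i\in K$ and then control denominators outside $T$, or carry the valuation data at $T$ multiplicatively as the paper does (via factors $\prod_w\pi_w^{e_{w,i}}$) rather than dividing it out.

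\emph{The $r=n$ construction is not resolved.} Writing $b_i=\beta_i\gamma_i^{m_i}$ with $\gamma_i$ supported on a Dirichlet-chosen prime set $\Sigma_i$ disjoint from $T$ buys you disjointness of the $\gamma_i$'s, but it does nothing about the places $v\notin T$ where $\beta_i$ itself has a ``bad'' valuation in $\{1,\dots,m_i-1\}$; those places are determined by $\beta_i$ and are disjoint from the freshly chosen $\Sigma_i$, so multiplying by $\gamma_i^{m_i}$ cannot repair them. Conversely, if you instead try to choose $\gamma_i$ supported on those bad places, you lose the freedom from Dirichlet, and you still have to ensure $\gamma_i\equiv 1\pmod{\pi_v^{M_v}}$ for all $v\in T$ so as not to spoil the approximation, which is exactly the residue-matching problem you flag as ``delicate'' and leave unresolved. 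As written this is a genuine gap, not a routine verification.

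The paper handles both issues at once and without a case split. It keeps the valuation data at $T$ as a multiplicative factor $\prod_{w\in T_f}\pi_w^{e_{w,i}}$ (never dividing by a coordinate), and replaces the free part $\alpha_i$ by $\alpha_i^d$ with $d\equiv 1\pmod{|(\O_v/\pi_v^N)^\times|}$ for every $v\in T_f$. The congruence $d\equiv 1$ means $\alpha_i^d\equiv\alpha_i$ modulo $\pi_v^N$, so the approximation at $T$ is untouched, while at $v\notin T$ the exponent $d\geq\max_i m_i$ automatically forces $v(x_i)\in\{0\}\cup\Z_{\geq m_i}$; a single strong-approximation step also arranges that at most one $\alpha_i$ (and no $\pi_w$) is non-unital at each $v\notin T$, so the minimum valuation is $0$. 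This $d$-th power trick is precisely the missing ingredient that your product decomposition is groping for, and it simultaneously repairs the $r<n$ anchoring problem by making a separate anchor coordinate unnecessary.
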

\begin{proof}
By Section \ref{sec:reduction}, we may reduce to the case $D=H$, with $H$ as defined in \eqref{eqn:n+2}. Let $T\subset \Omega_K$ be a finite set of places. Write $T=T_{\textnormal{f}}\cup T_\infty$ where $T_\infty = T \cap S_{\infty}$ and $T_{\textnormal{f}} = T \setminus T_{\infty}$. For each $v\in T$, let $Q_v = [y_{v,0}:\cdots:y_{v,n}] \in (\P^n_{\calO_S},\calH)(\O_{v})$. Our goal is to find $P=[x_0:\cdots:x_n]\in\left(\P^n_{\calO_S},\calH\right)(\calO_S)$ simultaneously approximating each $Q_v$. Without loss of generality, we assume that $T_{\textnormal{f}}\neq\emptyset$. Further, we may assume without loss of generality that $y_{v,i} \neq 0$ for all $v \in T_{\textnormal{f}}$ and $i \in \{0,\dots,n\}$, since $\left(\P^n_{\O_S},\calH\right)\left(\calO_v\right) \setminus H\left(K_v\right)$ is dense in $\left(\P^n_{\O_S},\calH\right)\left(\O_v\right)$ for all $v \in T_{\textnormal{f}}$, so we may choose the coordinates $y_{v,i}$ such that $\min_i\{v\left(y_{v,i}\right)\} = 0$.

We now construct uniformisers $\pi_v\in\O_S$ for each $v\in T_{\textnormal{f}}$ inductively in the following way. Let $T_{\textnormal{f}}=\{v_1,\ldots,v_r\}$. First, use strong approximation \cite[II.15]{CF} to choose $\pi_{v_1}$ such that $v_j(\pi_{v_1})=0$ for each $j\neq1$. Then for each $i>1$, we use strong approximation again to construct $\pi_{v_i}$ such that
\begin{enumerate}[label=(\roman*)]
\item $v_j(\pi_{v_i})=0$ for each $j\neq i$;
\item $u(\pi_{v_i})=0$ whenever $u(\pi_{v_j})>0$ for any $u\not\in S_\infty,j<i$.
\end{enumerate}
This process gives us a set of uniformisers $\pi_v\in\O_S$ for each $v\in T_{\textnormal{f}}$ such that for all places $u \not\in S_\infty$, we have $u(\pi_v)>0$ for at most one $v\in T_{\textnormal{f}}$ .

For each $v\in T_{\textnormal{f}}$, write $y_{v,i}=u_{v,i}\pi_v^{e_{v,i}}$, where $u_{v,i}\in\O_{v}^\times$. For $v \in T_{\textnormal{f}} \setminus S$ and $0 \leq i \leq r$, note that either $e_{v,i} \geq m_i$ or $e_{v,i}=0$ since $Q_v \in (\P^n_{\calO_S}, \calH)(\O_{v})$. Let $N$ be a large positive integer. For each $v\in T_{\textnormal{f}}$, set $d_v=|(\O_v/\pi_v^{N})^\times|$ and $d=1+\prod_{v\in T_{\textnormal{f}}} d_v$. By iteratively applying strong approximation as before, we construct $\alpha_0,\ldots,\alpha_n\in \O_S$ inductively such that
\begin{align*}
v\left(\alpha_i - u_{v,i}\prod_{w\in T_{\textnormal{f}},w\neq v}\pi_w^{-e_{w,i}}\right)\geq N,\quad &v\in T_{\textnormal{f}}, \\
\left|\alpha_i^d\prod_{w \in T_{\textnormal{f}}}\pi_w^{e_{w,i}}-y_{v,i}\right|_v\leq1/N,\quad &v\in T_\infty,
\end{align*}
and
\[
v(\alpha_i)v(\pi_w)=v(\alpha_i)v(\alpha_j)=0,\quad v\notin T\cup S_{\infty},w\in T_{\textnormal{f}}, j < i.
\]
Observe that because $v_i(u_{v,i}\prod_{w\in T_{\textnormal{f}},w\neq v}\pi_w^{-e_{w,i}})=0$, we must have $v_i(\alpha_i)=0$ for each $i$. Therefore, $\alpha_i\in \O_v^\times$ and $\alpha_i^d-\alpha_i\equiv 0\bmod \pi_v^N$ by construction of $d$. For each $i \in \{0,\dots,n\}$, define the $S$-integer
\begin{equation*}
x_i=\alpha_i^d\prod_{v\in T_{\textnormal{f}}} \pi_v^{e_{v,i}}.
\end{equation*}
Then, for each $v \in T_{\textnormal{f}}$, we have
\begin{equation}  \label{eqn:valcong}
\begin{aligned}
v(x_i-y_{v,i}) & \geq \min\left\{v\left(\left(\alpha_i^d - \alpha_i\right)\prod_{w\in T_{\textnormal{f}}} \pi_w^{e_{w,i}}\right), v\left(\alpha_i\prod_{w\in T_{\textnormal{f}}} \pi_w^{e_{w,i}} -u_{v,i}\pi_v^{e_{v,i}}\right)\right\} \\
& \geq \min\left\{v\left(\alpha_i^d - \alpha_i\right) + e_{v,i}, e_{v,i} + N\right\} = e_{v,i} + N,
\end{aligned}
\end{equation}
where the last equality follows from the fact that $\alpha_i^d-\alpha_i\equiv 0\bmod \pi_v$. Hence, by choosing $N$ large enough, $x_i$ simultaneously approximates each $y_{v,i}$ arbitrarily well, so $P=[x_0:\cdots:x_n]$ approximates each $Q_v$. To show that $P$ is a Campana point, it suffices by \eqref{eqn:vadiccampana} to show that, for all $v \not\in S$, we have
\begin{equation}\label{eqn:Campana}
v(x_i)-\min_{0 \leq i \leq r} \{v(x_j)\} \in \Z_{\geq m_i} \cup \{0\}.
\end{equation}
For $v\in T\setminus S$, this follows from \eqref{eqn:valcong} since $Q_v \in (\P^n_{\calO_S},\calH)(\O_{v})$. Now let $v\notin S \cup T$. If $v(\pi_w)=0$ for each $w\in T_{\textnormal{f}}$, then $v\left(x_i\right) = dv\left(\alpha_i\right)$ for all $i$, so \eqref{eqn:Campana} follows since $d \geq \max_{0 \leq i \leq r}\{m_i\}$ for $N$ sufficiently large.
If $v(\pi_w)=a>0$, then $v(\pi_{w'})=0$ for any $w'\in T,w'\neq w$, and $v(\alpha_i)=0$ for all $i$, hence $v(x_i)=ae_{w,i}$ for all $i$ and \eqref{eqn:Campana} is satisfied.
\end{proof}
\subsection{An extra hyperplane}
We now consider the case $r=n+1$. We restrict to the case where the orbifold coefficients $m_i$ are all equal to some $m$. Consider the family of diagonal hypersurfaces of degree $m$ in $\P^{n+1}$,
\begin{equation}\label{eqn:defX}
\mathfrak{X}\colon a_0x_0^m+\dots+a_{n+1}x_{n+1}^m=0\subset \P^{n+1}\times\P^{n+1},
\end{equation}
together with its projection $\pi\colon \mathfrak{X}\to\P^{n+1}_{a_0,\ldots,a_{n+1}}$ to the first factor. For $P\in \P^{n+1}(K)$, let $\mathfrak{X}_P$ denote the fibre $\pi^{-1}(P)$. For a finite set $T \subset \Omega_K$, define
$$
V_T=\left\{P\in \pi(\mathfrak{X}(K)):\mathfrak{X}_P(K)\textnormal{ is dense in }\mathfrak{X}_P(\A_K^T)\right\}.
$$
Lastly, define the rational map
$$
\rho\colon \mathfrak{X}\dashrightarrow \P^n, \quad \left([a_0:\dots:a_{n+1}],[x_0:\dots:x_{n+1}]\right) \mapsto [a_0x_0^m:\dots:a_nx_n^m].
$$
\begin{lemma}\label{lem:campanaHypersurface}
Let $X$ be a smooth hypersurface in $\P^{n+1}_K$ defined by
$$
a_0x_0^m+\cdots+a_{n+1}x_{n+1}^m=0.
$$
Let $v \in \Omega_K \setminus S$ and assume that $X(K_v)\neq\emptyset$. 
\begin{enumerate}[label=(\roman*)]
\item If $v(a_i)=v(a_j)$ for all $i,j$, then $\rho(X(K_v))\subset (\P^n_{\calO_S},\calH)(\O_v)$.
\item If $v(m)=0$, then there exists $P\in X(K_v)$ such that $\rho(P)\in (\P^n_{\calO_S},\calH)(\O_v)$.
\end{enumerate}
\end{lemma}
\begin{proof}
Let $P=[x_0:\cdots:x_{n+1}]\in X(K_v)$. Note that $a_i\neq0$ for all $i$ since $X$ is smooth. Let $e=\min_i\{v(a_ix_i^m)\}$ and set
$$e_i=v(a_ix_i^m)-e,$$
taking $e_i=\infty$ if $x_i=0$.
\\
(i) Assume that $v(a_i)=v(a_j)$ for all $i,j$. Then $m\mid e_i$ or $e_i = \infty$ for each $i$, hence $\rho(P)\in(\P^n_{\O_S},\calH)(\O_v)$.
\\
(ii) Assume that $v\left(m\right) = 0$. Let $E=\{i: e_i=0\}$. Consider the equation
\begin{equation}\label{eqn:sumoverE}
\sum_{i\in E} \frac{a_ix_i^m}{\pi^{e}_v}y_i^m\equiv0\pmod v,
\end{equation}
where $\pi_v$ is a uniformiser in $\calO_v$. There is the obvious solution $y_i\equiv1\pmod v$. For $i\notin E$, set $x_i'=\pi_v^{e+|v(a_i)|+1}$. Now consider the equation
\begin{equation}\label{eqn:completesum}
\sum_{i\in E} \frac{a_ix_i^m}{\pi_v^{e}}y_i^m+\sum_{i\notin E} \frac{a_ix_i'^m}{\pi_v^e}=0.
\end{equation}
The reduction modulo $v$ is precisely \eqref{eqn:sumoverE}, and the solution $y_i\equiv1\pmod v$ can be lifted (as $v\left(m\right) = 0$) to a solution of \eqref{eqn:completesum}. Choose such a solution $\left(y_i\right)_{i \in E}$ to \eqref{eqn:completesum} and let $x_i'=x_iy_i$ for $i\in E$. Observe that $P'=[x_0':\dots:x_{n+1}']\in X(K_v)$. Set
$$
e_i'=v(a_ix_i'^m)-\min_j\{v(a_jx_j'^m)\}.
$$
We have $e'_i=0$ for $i\in E$ and $e'_i \geq m$ for $i\notin E$, hence $\rho(P')\in (\P^n_{\O_S},\calH)(\O_v)$.
\end{proof}
\begin{proposition}\label{prop:cwadiagonal}
Suppose that the $m_i$ are all equal to some $m\geq2$. If $V_T$ is dense in $\pi(\mathfrak{X}(\A_K))$, then $(\P^n_{\calO_S},\calH)(\O_S)$ is dense in $(\P^n_{\O_S},\calH)(\A^{T}_K)$. 
\end{proposition}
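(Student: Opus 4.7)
The plan is to transfer the adelic approximation problem from $(\P^n,\calH)$ to the diagonal hypersurface family $\mathfrak{X}$ via the rational map $\rho$, use the hypothesis to choose a well-behaved base point $[\alpha]\in V_T$, apply classical weak approximation on the smooth fibre $\mathfrak{X}_{[\alpha]}$, and finally verify the Campana condition at every place by combining parts (i) and (ii) of Lemma \ref{lem:campanaHypersurface}. Fix a basic open neighbourhood of $(Q_v)_{v\notin T}$ in $(\P^n_{\calO_S},\calH)(\A_K^T)$, determined by a finite set of places $T_1$; enlarge $T_1$ if necessary so that $T_1\supset T\cup S\cup\{v:v\mid m\}$. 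Since $(\P^n_{\calO_S},\calH)(\O_v)\setminus D_{\red}(K_v)$ is open and dense in $(\P^n_{\calO_S},\calH)(\O_v)$ (perturb any vanishing coordinate to a high $m$th power of a uniformiser), we may assume $Q_v=[y_{v,0},\ldots,y_{v,n}]$ has $y_{v,i}\neq 0$ for all $i$ and $\sum_{i=0}^n y_{v,i}\neq 0$ at each $v\in T_1\setminus T$. The tautological lift
\[
[\alpha_v]:=[y_{v,0},\ldots,y_{v,n},-\textstyle\sum_{i=0}^n y_{v,i}],\qquad [x_v]:=[1,\ldots,1],
\]
then belongs to $\mathfrak{X}(K_v)$, satisfies $\rho([\alpha_v],[x_v])=Q_v$, and has smooth fibre $\mathfrak{X}_{[\alpha_v]}$. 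Choosing arbitrary local lifts at the remaining places produces an adelic point $([\alpha_v])_v\in\pi(\mathfrak{X}(\A_K))$.

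By hypothesis we may next find $[\alpha]\in V_T$ arbitrarily close to each $[\alpha_v]$ for $v\in T_1$, which keeps $\mathfrak{X}_{[\alpha]}$ smooth and forces the $v$-adic reductions of $[\alpha]$ and $[\alpha_v]$ to agree at these places. Let $S_\alpha$ be the finite set of places $v\notin T_1$ at which some $v(\alpha_i)>0$. I would then build local targets $[x_v']\in\mathfrak{X}_{[\alpha]}(K_v)$ as follows: for $v\in T_1\setminus T$, smoothness of $\mathfrak{X}_{[\alpha_v]}$ at $[x_v]$ together with Hensel's lemma produces $[x_v']$ close to $[x_v]$; for $v\in S_\alpha$, the fact that $v\nmid m$ (since $T_1$ contains all such places) allows us to invoke Lemma \ref{lem:campanaHypersurface}(i) on the smooth fibre $\mathfrak{X}_{[\alpha]}$ (which has $K_v$-points as $[\alpha]\in V_T\subset\pi(\mathfrak{X}(K))$) to obtain $[x_v']$ with $\rho([\alpha],[x_v'])\in(\P^n_{\calO_S},\calH)(\O_v)$. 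Since $[\alpha]\in V_T$, $\mathfrak{X}_{[\alpha]}(K)$ is dense in $\mathfrak{X}_{[\alpha]}(\A_K^T)$, so I can select $[x]\in\mathfrak{X}_{[\alpha]}(K)$ simultaneously approximating every $[x_v']$ for $v\in(T_1\setminus T)\cup S_\alpha$.

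Set $P:=\rho([\alpha],[x])\in\P^n(K)$. Approximation of $Q_v$ at each $v\in T_1\setminus T$ follows from continuity of $\rho$ along the chain $[\alpha]\approx[\alpha_v]$, $[x]\approx[x_v']\approx[x_v]$. To verify the Campana property at a place $v\notin S$, there are three cases: at $v\in T_1\setminus T$ and at $v\in S_\alpha$, openness of the set of Campana $\O_v$-points (Note \ref{note:open}) combined with closeness of $P$ to $\rho([\alpha],[x_v'])$ (which is already a Campana $\O_v$-point) gives the conclusion; at every remaining place $v$ we have $v(\alpha_i)=0$ for all $i$, and Lemma \ref{lem:campanaHypersurface}(ii) gives $P\in(\P^n_{\calO_S},\calH)(\O_v)$ for free. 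The main obstacle I anticipate is precisely this last verification: a rational $[\alpha]$ unavoidably has a finite but a priori uncontrolled set $S_\alpha$ of ``bad'' places at which Lemma \ref{lem:campanaHypersurface}(ii) fails to apply, and the resolution is to pre-empt these places by using Lemma \ref{lem:campanaHypersurface}(i) to manufacture Campana-compatible local lifts on $\mathfrak{X}_{[\alpha]}$ and folding them into the weak approximation step on the fibre.
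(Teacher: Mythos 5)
Your argument follows the same architecture as the paper's: lift the approximation target $Q_v$ to $\mathfrak{X}$, use density of $V_T$ to pick a rational $[\alpha]$ with smooth fibre, then use weak approximation on $\mathfrak{X}_{[\alpha]}$ away from $T$ together with Lemma~\ref{lem:campanaHypersurface} to produce a Campana $\O_S$-point close to the targets. The tautological lift $([\alpha_v],[1,\ldots,1])$ is a clean way of packaging the paper's choice of $P_v$ and the implicit-function section $\sigma_v$, and bundling $\{v\mid m\}$ into $T_1$ is a valid simplification of the paper's separate treatment of those places. Your anticipated obstacle (the uncontrolled set $S_\alpha$ of bad places) and its remedy (Lemma~\ref{lem:campanaHypersurface}(i) folded into fibre approximation) are both correct.

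There is, however, a genuine gap at the places $v\in T\setminus S$. A Campana $\O_S$-point must be a local Campana point at \emph{every} $v\notin S$, including $v\in T$, yet your fibre approximation $\mathfrak{X}_{[\alpha]}(K)\subset\mathfrak{X}_{[\alpha]}(\A_K^{T})$ gives no control whatsoever over $[x]$ at $v\in T$. The only mechanism available to certify the Campana condition there ``for free'' is Lemma~\ref{lem:campanaHypersurface}(ii), which requires $v(\alpha_i)=v(\alpha_j)$ for all $i,j$. But you approximate $[\alpha]$ at $v\in T$ to an ``arbitrary local lift'' $[\alpha_v]$, which gives no control on these valuations; your third-case claim ``at every remaining place $v$ we have $v(\alpha_i)=0$ for all $i$'' is simply not justified for $v\in T\setminus S$ (those $v$ lie in $T_1$, so they fall into your ``remaining'' case, not into $S_\alpha$). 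The fix is exactly what the paper does with $R_0=[-1,1,\ldots,1]$: at $v\in T$ (and at the places of $\{v\mid m\}$ not already constrained), take the local lift to be $R_0$ rather than arbitrary. Then approximating $[\alpha]$ to $R_0$ at $v\in T$ forces $v(\alpha_i)=v(\alpha_j)$ there, so Lemma~\ref{lem:campanaHypersurface}(ii) applies; as a bonus, $R_0\in\pi(\mathfrak{X}(K_v))$ is guaranteed since $\mathfrak{X}_{R_0}$ contains the $K$-point $[1,1,0,\ldots,0]$, so the adelic point $([\alpha_v])_v$ genuinely lies in $\pi(\mathfrak{X}(\A_K))$.

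Two small further points: at archimedean $v$ you should cite the implicit function theorem rather than Hensel's lemma, and the condition you want throughout is the projectively invariant ``$v(\alpha_i)=v(\alpha_j)$ for all $i,j$'' rather than ``$v(\alpha_i)=0$'' (the latter presupposes a normalisation of the homogeneous coordinates).
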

\begin{proof}
Let $T'$ be any finite set of places of $K$ disjoint from $T$. For each $v\in T'$, let $Q_v \in (\P^n_{\O_S},\calH)(\O_{v})$ if $v\notin S$ and $Q_v \in \P^n(K_v)$ otherwise.  It is easily seen that we may choose points $P_v\in \mathfrak{X}(K_v)$ such that $\rho(P_v) = Q_v$ and $\mathfrak{X}_{\pi(P_v)}$ is smooth. We will construct a point $M\in \mathfrak{X}(K)$ which is $v$-adically close to $P_v$ for each $v\in T'$ and $\rho(M)\in(\P^n_{\O_S},\calH)(\O_S)$.

Let $R_v=\pi(P_v)\in\P^{n+1}(K_v)$. Using the implicit function theorem, let $U_v\subset \P^{n+1}(K_v)$ be a small open neighbourhood around $R_v$ such that there is a section $\sigma_v\colon U_v\to \mathfrak{X}(K_v)$ to $\pi$ with $\sigma_v(R_v)=P_v$. Set $R_0=[-1:1:\dots:1]\in\pi(\mathfrak{X}(K)) \subset \P^{n+1}\left(K\right)$. As $V_T$ is dense in $\pi(\mathfrak{X}(\A_K))$, there is a point $R=[a_0:\dots:a_{n+1}]\in V_T$, which
\begin{enumerate}[label=(\roman*)]
\item lies in $U_v$ for each $v\in T'$, and
\item \label{item:2} approximates $R_0$ in $\P^{n+1}(K_v)$ for each $v\in T$ and each $v\notin T'$ dividing $m$.
\end{enumerate}
We now construct a point $\left(M_v\right)\in \mathfrak{X}_R(\A_K)$ as follows:
\begin{itemize}
\item For each $v\in T'$, let $M_v=\sigma_v(R)$.
\item For each $v\notin T'\cup S$, choose any point $M_v\in \mathfrak{X}_R(K_v)$ such that $\rho(M_v)\in(\P^{n}_{\O_S},\calH)(\O_v)$ using Lemma \ref{lem:campanaHypersurface} and (ii).
\item For each $v\in S\setminus T'$ choose any point $M_v\in \mathfrak{X}_R(K_v)$.
\end{itemize}
Let $E\subset\Omega_K$ be the finite subset of places $v$ such that $v(a_i)\neq v(a_j)$ for some $i,j$. Note that by (ii), we have $E\cap T=\emptyset$.

Since $\mathfrak{X}_R(K)$ is dense in $\mathfrak{X}_R(\A_K^T)$, there exists $M\in \mathfrak{X}_R(K)$ approximating $M_v$ in $\mathfrak{X}_R(K_{v})$ for each $v\in T'\cup E$. Note that, for $v \in T'$, the point $M$ is $v$-adically close to $P_v$, hence $\rho\left(M\right)$ is $v$-adically close to $Q_v$. Indeed, $M$ is $v$-adically close to $M_v = \sigma_v\left(R\right)$, which lies near $P_v$ as the open neighbourhood $U_v$ is small. To see that $\rho(M)\in(\P^n_{\O_S},\calH)(\O_S)$, first note that $\rho(M)\in(\P^n_{\O_S},\calH)(\O_v)$ for $v\in (T'\cup E)\setminus S$, since $M$ approximates $M_v$ and $\rho(M_v)\in(\P^n_{\O_S},\calH)(\O_v)$, since all points sufficiently close to $Q_v$ are local Campana points. For any other $v\notin S$, we have $v(a_i)=v(a_j)$ for all $i,j$, so we can apply Lemma \ref{lem:campanaHypersurface}. Finally, for all $v\in S \cap T'$, $\rho\left(M\right)$ approximates $\rho\left(M_v\right)$ which in turn approximates $Q_v$, hence $\rho(M)\in(\P^n_{\O_S},\calH)(\O_v)$, and for $v \in S \setminus T'$, there is no local Campana condition.
\end{proof}
\begin{corollary} \label{cor:logfanoquadric}
Let $D_0,\dots,D_{n+1}\subset \P^n$ be hyperplanes in general linear position. Suppose $m_i$ are all equal to some $m\geq2$.  If all smooth degree-$m$ hypersurfaces in $\P^{n+1}$ satisfy weak approximation, then $\left(\P^n_{\O_S},\calD\right)$ satisfies CWA.
\end{corollary}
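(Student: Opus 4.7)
The plan is to apply Proposition~\ref{prop:cwadiagonal} at $T=\emptyset$, after the reduction to coordinate hyperplanes from Section~\ref{sec:reduction}. Then CWA for $(\P^n_{\O_S},\calD)$ reduces to showing that $V_{\emptyset}$ is dense in $\pi(\mathfrak{X}(\A_K))$. Under the weak approximation hypothesis for smooth diagonal degree-$m$ hypersurfaces in $\P^{n+1}$, we have
\[
V_{\emptyset}\supset\{R\in U(K):\mathfrak{X}_R(\A_K)\neq\emptyset\},
\]
where $U\subset\P^{n+1}$ denotes the open locus $\{[a_0:\cdots:a_{n+1}]:\prod_i a_i\neq 0\}$ of smooth diagonal-hypersurface fibres. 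Indeed, for such $R$ the fibre $\mathfrak{X}_R$ is a smooth diagonal hypersurface, so the hypothesis makes $\mathfrak{X}_R(K)$ dense in $\mathfrak{X}_R(\A_K)$, and in particular $R\in\pi(\mathfrak{X}(K))$. So it suffices to approximate arbitrary adelic points $(R_v)\in\pi(\mathfrak{X}(\A_K))$ by such $R\in U(K)$.

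The central observation is that the second projection $\sigma\colon\mathfrak{X}\to\P^{n+1}_{[x]}$ exhibits $\mathfrak{X}$ as a Zariski-locally-trivial $\P^n$-bundle over $\P^{n+1}$: the defining equation $\sum_j a_j x_j^m=0$ is linear in $(a_j)_j$, so on the chart $\{x_i\neq 0\}$ solving for $a_i$ yields an explicit trivialisation $\mathfrak{X}|_{\{x_i\neq 0\}}\cong\{x_i\neq 0\}\times\P^n_{a_0,\dots,\widehat{a_i},\dots,a_{n+1}}$. A routine two-step argument (first approximate on the base using WA for $\P^{n+1}$, then on each fibre using WA for $\P^n$, transporting adelic fibre targets through the local trivialisations) then shows that $\mathfrak{X}$ itself satisfies weak approximation.

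With WA for $\mathfrak{X}$ in hand, density of $V_{\emptyset}$ is proved as follows. Fix $(R_v)\in\pi(\mathfrak{X}(\A_K))$, a finite set $T_0\subset\Val(K)$, and open neighbourhoods $U_v\ni R_v$ for $v\in T_0$. Choose witnesses $P_v\in\mathfrak{X}_{R_v}(K_v)$. Since $\pi^{-1}(U)$ is Zariski open and dense in the smooth geometrically integral variety $\mathfrak{X}$, its $K_v$-points are $v$-adically dense in $\mathfrak{X}(K_v)$, so we may replace each $(R_v,P_v)$ by a nearby point $(R_v',P_v')\in\pi^{-1}(U)(K_v)$ with $R_v'\in U_v\cap U(K_v)$. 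Applying WA on $\mathfrak{X}$ produces $(R,P)\in\mathfrak{X}(K)$ approximating $(R_v',P_v')$ at every $v\in T_0$. Continuity of $\pi$ then yields $R\in U_v$ for $v\in T_0$, while closeness of $R$ to $R_v'\in U(K_v)$ forces $R\in U(K)$, since $U$ is cut out by non-vanishing of $K$-rational coordinates. Finally, $P\in\mathfrak{X}_R(K)$ witnesses $\mathfrak{X}_R(\A_K)\neq\emptyset$, so $R\in V_{\emptyset}$, as required.

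The main obstacle is verifying WA for the total space $\mathfrak{X}$. The projective bundle description makes this tractable by reducing to classical WA on $\P^{n+1}$ and on $\P^n$, but some care is needed to transfer adelic fibre data through different trivialising charts (at different places $v\in T_0$ one may need different charts $\{x_{i_v}\neq 0\}$). The remaining steps—perturbation of the local adelic data into the smooth-fibre locus, and the observation that $R\in U(K)$ follows from the local approximations—are then essentially topological, relying on the Zariski-density of $\pi^{-1}(U)$ in the smooth variety $\mathfrak{X}$.
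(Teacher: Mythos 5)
Your proof is correct and takes essentially the same approach as the paper: reduce via Proposition~\ref{prop:cwadiagonal} to showing $V_{\emptyset}$ is dense in $\pi(\mathfrak{X}(\A_K))$, observe that the second projection realises $\mathfrak{X}$ as a $\P^n$-bundle over $\P^{n+1}$ so that $\mathfrak{X}$ is smooth and rational and hence satisfies weak approximation, and conclude. You simply spell out two points the paper's proof leaves implicit --- the Zariski-local trivialisation underlying WA for $\mathfrak{X}$, and the $v$-adic perturbation of adelic data into $\pi^{-1}(U)$ needed to pass from density of $\pi(\mathfrak{X}(K))$ to density of $V_{\emptyset}$ --- but the underlying argument is identical.
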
 
\begin{proof}
By Section \ref{sec:reduction}, we can assume that $D=H$ as in \eqref{eqn:n+2}. Let $\mathfrak{X}_0\subset \mathfrak{X}$ denote the open locus consisting of smooth fibres of $\pi$. Since $\pi(\mathfrak{X}_0(K))\subset V_{\emptyset}$ by assumption, it suffices to show that $\pi(\mathfrak{X}(K))$ is dense in $\pi(\mathfrak{X}(\A_K))$ by Proposition \ref{prop:cwadiagonal}. To see this, note that the projection of $\mathfrak{X}$ to the second factor $\P^{n+1}_{x_0,\ldots,x_{n+1}}$ in \eqref{eqn:defX} realises it as a $\P^n$-bundle over $\P^{n+1}$. Hence, $\mathfrak{X}$ is rational and smooth, so it satisfies weak approximation. Now $\mathfrak{X}(K)$ being dense in $\mathfrak{X}(\A_K)$ implies $\pi(\mathfrak{X}(K))$ is dense in $\pi(\mathfrak{X}(\A_K))$.
\end{proof}
\begin{remark}
By \eqref{eqn:logFano}, the log Fano condition for $(\P^n,D)$ is $m \leq n+1$, which is precisely the condition for the smooth fibres of $\pi$, which are hypersurfaces of degree $m$ in $\P^{n+1}$, to be Fano. Taking  $n=2$, this becomes $m \leq 3$. When $m=3$, then $\mathfrak{X}$ is the family of diagonal cubic surfaces. Work of Bright, Browning and Loughran \cite[Thm.~1.6]{BBL} implies that $V_{\emptyset}$ contains $0\%$ of the points in $\P^3(K)$ when ordered by height. However, this does not imply that $V_{\emptyset}$ cannot be dense in $\P^3(\A_K)$.
\end{remark} 

A common tool in studying weak approximation is the Brauer--Manin obstruction. Let $X$ be a variety over a number field $K$, and let $\Br(X)$ denote its (cohomological) Brauer group. There is a pairing
$$
\ev\colon X(\A_K)\times \Br(X) \rightarrow \Q / \Z, \quad
(P, \calA) \mapsto \ev_{\calA}(P).
$$
Denote the left kernel by $X(\A_K)^{\Br}$, which is sandwiched between two sets
$$X(K)\subset X(\A_K)^{\Br}\subset X(\A_K)$$
by class field theory. Since the pairing is continuous, the set $X(\A_K)^{\Br}$ acts as an obstruction to weak approximation and the existence of rational points. See \cite[\S8.2]{POO} for more details on this obstruction.
\begin{mydef} \label{def:WABMO}
We say that \emph{weak approximation with Brauer--Manin obstruction holds for $X$} if $X\left(K\right)$ is dense in $X\left(\A_K\right)^{\Br}$.
\end{mydef}
The Brauer--Manin obstruction to weak approximation is of particular interest in the case of Fano varieties due to the following conjecture of Colliot-Th\'el\`ene.
\begin{conjecture}[Colliot-Th\'el\`ene]\cite[p.~174]{CT}\label{conj:ct}
Let $X$ be a smooth projective variety over a number field $K$. If $X$ is geometrically rationally connected, then $X(K)$ is dense in $X(\A_K)^{\Br}$.
\end{conjecture}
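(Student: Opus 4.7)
Colliot-Th\'el\`ene's conjecture is a central open problem in arithmetic geometry, and a complete proof is well beyond currently available techniques, so what follows is a strategic sketch rather than a realistic plan. The general approach to particular cases, and in principle to the conjecture itself, is the \emph{fibration method} developed by Harari, Colliot-Th\'el\`ene, Skorobogatov and others: given $X$, find a dominant morphism $f \colon X \to B$ onto a lower-dimensional geometrically rationally connected base with geometrically rationally connected smooth generic fibre, and inductively deduce the conjecture for $X$ from the conjecture for $B$ together with that for enough smooth fibres.

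Step by step, the plan would run as follows. First, exploit the rational connectedness of $X$ to exhibit such a fibration on a birational model. For example, since $X$ carries many very free rational curves, projecting from one of them, or taking a suitable Lefschetz pencil, yields after blow-up a morphism to $\P^1$ whose general fibre is again smooth and geometrically rationally connected. Second, establish a Brauer--Manin fibration theorem of the form: if $B$ satisfies the conjecture and sufficiently many smooth fibres of $f$ satisfy it, then so does $X$. The technical engine is Harari's formal lemma, which controls how classes in $\Br(X)$ specialise along the fibres, combined with weak approximation on $B$ applied to the locus where the fibre has a local point everywhere. Third, iterate this reduction until one is left with a geometrically rationally connected \emph{surface}, and eventually a genus-zero curve, on which the conjecture is the classical Hasse--Minkowski theorem.

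The main obstacle is the surface base case. Even for smooth cubic surfaces and del Pezzo surfaces of degrees $1$ and $2$, the conjecture remains open in full generality; proofs are available only for restricted families such as Ch\^atelet surfaces, certain conic bundles over $\P^1$ with few degenerate fibres, and some degree $4$ del Pezzo surfaces, typically via deep ad hoc input (sieve methods, descent, and additive combinatorics in the style of Browning--Matthiesen--Skorobogatov). Moreover, making the inductive step rigorous in arbitrary dimension requires controlling the contribution of the bad (non-smooth) fibres to the Brauer--Manin pairing and matching it with classes pulled back from $B$, which for a general rationally connected $X$ is delicate. A real proof would therefore need either a fundamentally new technique for rationally connected surfaces, or a global method that dispenses with the inductive reduction altogether, perhaps exploiting the moduli of very free curves on $X$ to produce rational points directly from Brauer-unobstructed adelic points.
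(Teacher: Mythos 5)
This statement is a \emph{conjecture}, not a theorem: the paper never proves it and never claims to. It is stated as Conjecture \ref{conj:ct} precisely so that it can be invoked as a hypothesis (in Corollary \ref{cor:projlogfano} and the surrounding discussion of the $r = n+1$ case). There is therefore no proof in the paper to compare your attempt against. You correctly recognise that the conjecture is wide open, and your sketch of the fibration method, Harari's formal lemma, and the bottleneck at the surface/low-degree del Pezzo base case is an accurate summary of the state of the art rather than a proof; it would not be reasonable to treat it as one, and you explicitly do not. The only substantive remark worth adding is that the paper does not need the conjecture in full generality: in the proof of Corollary \ref{cor:projlogfano} it is applied only to smooth diagonal hypersurfaces of degree $m$ in $\mathbb{P}^{n+1}$ (and, for $(n,m)=(2,3)$, specifically to diagonal cubic surfaces, where the Brauer group is understood via Colliot-Th\'el\`ene--Kanevsky--Sansuc). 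That is why Theorem \ref{thm:bmo}(ii) is phrased as conditional on ``weak approximation with Brauer--Manin obstruction holds for smooth diagonal hypersurfaces of degree $m$,'' and why Corollary \ref{cor:cwa} can replace the conjectural input by unconditional results of Skinner, Wooley and Birch in the quoted degree ranges. So the right response to this prompt is not a proof attempt but the observation that nothing here is being proved; at most one should record the restricted special cases that are actually used and the unconditional substitutes available for them.
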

\begin{note} \label{note:fano}
Fano varieties are geometrically rationally connected, since the property of being Fano is invariant under base change and a Fano variety over an algebraically closed field of characteristic zero is rationally connected \cite[p.~266]{POO}.
\end{note}
We now show that CWA almost always holds if we assume Conjecture \ref{conj:ct}.
\begin{corollary}\label{cor:projlogfano}
Let $D_0,\dots,D_{n+1}\subset \P^n$ be hyperplanes in general linear position. Suppose $m_i$ are all equal to some $m\geq2$. Assume Conjecture \ref{conj:ct} holds. Suppose that $(\P^n,D)$ is log Fano (which holds precisely when \ $m \leq n+1$).
\begin{enumerate}[label=(\roman*)]
\item If $(n,m)=(2,3)$, then for some place $v_0$, we have $(\P^2_{\O_S},\calD)(\O_S)$ is dense in $(\P^2_{\O_S},\calD)(\A^{\{v_0\}}_K)$.
\item Otherwise, $(\P^n_{\O_S},\calD)$ satisfies CWA.
\end{enumerate}
\end{corollary}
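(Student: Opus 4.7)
The strategy is to apply Theorem~\ref{thm:bmo}(ii), whose hypothesis on weak approximation with Brauer--Manin obstruction for smooth diagonal degree-$m$ hypersurfaces in $\P^{n+1}$ will be deduced from Conjecture~\ref{conj:ct}. By the reduction in Section~\ref{sec:reduction}, we may assume $D=H$ as in \eqref{eqn:n+2}. The smooth fibers of the projection $\pi\colon\mathfrak{X}\to\P^{n+1}$ from \eqref{eqn:defX} are smooth diagonal hypersurfaces of degree $m$ in $\P^{n+1}$. A direct adjunction calculation shows that the log Fano condition \eqref{eqn:logFano} for $(\P^n,D)$ with $r=n+1$ and uniform $m_i=m$ is equivalent to $m\leq n+1$, which is precisely the Fano condition for such hypersurfaces.

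Since Fano varieties are geometrically rationally connected (Note~\ref{note:fano}), Conjecture~\ref{conj:ct} applies and yields weak approximation with Brauer--Manin obstruction for the smooth fibers of $\pi$. This verifies the hypothesis of Theorem~\ref{thm:bmo}(ii). For part~(ii), where $(m,n)\neq(3,2)$, the second assertion of Theorem~\ref{thm:bmo}(ii) gives CWA immediately.

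For part~(i) with $(n,m)=(2,3)$, Theorem~\ref{thm:bmo}(ii) delivers CWWA, which by definition provides some finite set of places $T$. To sharpen this to a single place $T=\{v_0\}$, I would revisit Proposition~\ref{prop:cwadiagonal}: for a fixed $T$, the conclusion reduces to showing that $V_T$ is dense in $\pi(\mathfrak{X}(\A_K))$. Since $\mathfrak{X}$ is rational---being a $\P^n$-bundle over $\P^{n+1}$ via the second projection---it satisfies weak approximation, yielding flexibility in the choice of base point $R\in\mathfrak{X}(K)$. Because the Brauer group modulo constants of a smooth diagonal cubic surface is a finite $3$-torsion group, its Brauer--Manin obstruction for a fiber $\mathfrak{X}_R$ can be neutralized at a single auxiliary place $v_0$ (of good reduction and with sufficiently many local points) whose local evaluation maps surject onto the relevant finite subgroup of $\Q/\Z$. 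Combining this with the density of $\mathfrak{X}(K)$ in $\mathfrak{X}(\A_K)$ will yield density of $V_{\{v_0\}}$ and hence the desired conclusion via Proposition~\ref{prop:cwadiagonal}.

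The main obstacle is part~(i): one must carefully argue that a single auxiliary place suffices to resolve the Brauer--Manin obstruction for the family of diagonal cubic surfaces uniformly in the base, i.e., that the ``bad'' places can be concentrated into $\{v_0\}$ by exploiting weak approximation on $\mathfrak{X}$ together with the bounded and explicit structure of the geometric Brauer group of a cubic surface.
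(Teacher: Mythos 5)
Your proposal for part~(ii) is essentially circular: Theorem~\ref{thm:bmo}(ii) is not proved independently in the paper; Corollary~\ref{cor:projlogfano} \emph{is} its proof. Invoking Theorem~\ref{thm:bmo}(ii) as a black box just restates what is to be shown. To make this honest, you need to supply the missing argument: for $n\geq3$, the smooth fibres $\mathfrak{X}_P$ are Fano complete intersections of dimension $\geq 2$ with $\Br(\mathfrak{X}_P)=\iota^*\Br(K)$ (by \cite[Prop.~A.1]{PV}), so Conjecture~\ref{conj:ct} gives \emph{genuine} weak approximation, not merely weak approximation modulo the Brauer--Manin obstruction. Then Corollary~\ref{cor:logfanoquadric} applies directly with $T=\emptyset$. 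For $(n,m)=(2,2)$ one uses unconditionally that quadrics satisfy weak approximation. You correctly identify the log Fano $\Leftrightarrow$ Fano-fibre equivalence, but without the Brauer-group triviality step the Conjecture only yields WA-with-BM, which is not the hypothesis Corollary~\ref{cor:logfanoquadric} needs.

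For part~(i), your intuition that a single auxiliary place $v_0$ should suffice to kill the $3$-torsion Brauer--Manin obstruction is on the right track, but the specific mechanism is wrong. You propose choosing $v_0$ to be a place of \emph{good} reduction with ``sufficiently many local points''; however, at a place of good reduction for a diagonal cubic surface the evaluation map of an element of $\Br(X)/\Br_0(X)$ is constant (the class is unramified and the reduction is smooth), so the local evaluation map cannot surject onto $\Br(K_{v_0})[3]$. The paper's Lemma~\ref{lem:projlogfano} instead requires $v_0$ to be a place of \emph{bad} reduction, specifically $v_0(a_0)=1$ and $v_0(a_i)=0$ for $i>0$, so that the special fibre is a cone over a cubic curve; the surjectivity of the evaluation map is then a consequence of Bright's analysis of bad-reduction Brauer--Manin behaviour. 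A second gap: once $M_{v_0}$ has been adjusted to annihilate the Brauer pairing, one must still check that $\rho(M_{v_0})$ is a local Campana point at $v_0$, and this fails for a generic point of $X(K_{v_0})$ because $v_0(a_0x_0^m)\equiv 1\pmod{m}$ whenever $x_0\neq0$. The paper handles this by restricting the evaluation map to the curve $C\colon x_0=0$, which forces the first coordinate of $\rho$ to vanish and makes the remaining valuations multiples of $m$. Without this restriction your construction produces a rational point on $X$ that need not map to a Campana point of $(\P^2_{\O_S},\calD)$.

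In summary, the broad outline of your plan matches the paper's strategy, but it elides the two technical points that make the $(n,m)=(2,3)$ case work (bad-reduction place and restriction to $C$), and for the remaining cases it relies circularly on Theorem~\ref{thm:bmo}(ii) rather than filling in the Brauer-group triviality needed to pass from WA-with-BM to WA.
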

We first present a lemma which will be used in the proof. Given a variety $X$ over a field $k$, we denote by $\Br_0\left(X\right)$ the subgroup $\im\left(\Br k \rightarrow \Br X\right)$ of $\Br X$, where $\Br k \rightarrow \Br X$ is the map arising from the structure morphism of $X$.
\begin{lemma}\label{lem:projlogfano}
There exists a constant $B$ such that the following statement is true: let $X$ be any smooth diagonal cubic surface
$$a_0x_0^3+a_1x_1^3+a_2x_2^3+a_3x_3^3=0$$
such that there exists a finite place $v$ where $v(a_0)=1$ and $v(a_i)=0$ for all $i\neq 0$. Assume that $q_v > B$. Let $C \subset X$ be the curve given by $x_0 = 0$. Let $\calA \in \Br(X)[3]$ whose image in $\Br(X)/\Br_0(X)$ also has order $3$. Then the restricted evaluation map
$$
\ev_\calA|_C \colon C(K_{v})\hookrightarrow X(K_{v})\to\Br(K_{v})[3]
$$
is surjective.
\end{lemma}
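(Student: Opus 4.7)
The plan is to combine the residue formalism on a regular $\calO_v$-model of $X$ with the Hasse--Weil bound for curves over finite fields. Consider the projective closure $\calX \subset \P^3_{\calO_v}$ cut out by the same defining equation as $X$. Provided $B$ is large enough that $v \nmid 3$, the conditions $v(a_0) = 1$ and $v(a_i) = 0$ for $i \neq 0$ force $\calX$ to be regular as a two-dimensional $\calO_v$-scheme (the local equation at the apex contains $a_0$ with valuation $1$, so it lies outside the square of the maximal ideal), even though its special fibre $\calX_s$ is the singular cone in $\P^3_{\F_v}$ over the smooth reduction $\bar{C}$ of $C$, with apex $[1:0:0:0]$. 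In particular $\bar{C}$ lies entirely in the smooth locus of $\calX_s$, so every $P \in C(K_v)$ reduces to a smooth $\F_v$-point of $\calX_s$, and Hensel lifting along $\bar{C}$ is available.

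By purity for the Brauer group on the regular surface $\calX$, the class $\calA \in \Br(X)$ has a well-defined residue $\chi := \partial_{\calX_s}(\calA) \in H^1(\F_v(\calX_s), \Q/\Z)[3]$ along the special fibre, and for any $P \in X(K_v)$ reducing to a smooth point $\bar{P}$ of $\calX_s$ the explicit reciprocity formula for local invariants gives
$$
\inv_v \ev_\calA(P) \;=\; \chi(\mathrm{Frob}_{\bar{P}}).
$$
Thus the restriction of $\ev_\calA$ to $C(K_v)$ factors through reduction mod $v$, and its image is controlled by $\chi|_{\bar{C}} \in H^1(\F_v(\bar{C}), \Q/\Z)[3]$ (well-defined because $\calA$ is unramified along $C$ as a divisor of $X$). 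Using an explicit cyclic-algebra representative for a generator of the order-$3$ part of $\Br(X)/\Br_0(X)$ (the classical presentation for diagonal cubic surfaces due to Colliot-Th\'el\`ene--Kanevsky--Sansuc, of the form $(\ell_1/\ell_2,\, a_i/a_j)_3$ for suitable linear forms $\ell_1, \ell_2$ on $X$), one computes the tame symbol along $\calX_s$ directly; the asymmetry $v(a_0)=1$, $v(a_i)=0$ for $i \neq 0$ is exactly what forces this residue to be a non-trivial character of order $3$.

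A non-trivial order-$3$ character of $\F_v(\bar{C})$ corresponds to a geometrically connected degree-$3$ cyclic \'etale cover of a dense open subset of $\bar{C}$, whose smooth projective completion has genus bounded absolutely in terms of $g(\bar{C}) = 1$ and the degree $3$. The Hasse--Weil bound then implies that for $q_v$ larger than a uniform constant $B$, every Frobenius class is represented among the $\F_v$-points of this cover; equivalently, $\chi|_{\bar{C}}$ takes each of its three values on $\bar{C}(\F_v)$. Lifting such $\F_v$-points to $P \in C(K_v)$ via Hensel and invoking the factorisation above realises each element of $\Br(K_v)[3]$, which is the desired surjectivity. The main obstacle is verifying that $\chi|_{\bar{C}}$ is non-trivial: this reduces to a concrete tame-symbol computation in the chosen cyclic-algebra presentation of $\calA$, and the numerical hypotheses on the $v(a_i)$ are precisely what make this symbol non-trivial at the generic point of $\bar{C}$.
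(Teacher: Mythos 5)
Your approach is essentially the one in the paper, which cites Bright \cite{BRI15}: work on the special fibre of the natural regular $\calO_v$-model, use the residue formalism (Bright's $\delta$-map) to translate the Brauer evaluation into the value of a character or \'etale cover at the Frobenius of the reduction, exploit the cone structure to reduce everything to the smooth curve $\bar{C}$, and finish with Lang--Weil/Hasse--Weil point counting for $q_v$ large. Your reciprocity identity $\inv_v\ev_{\calA}(P)=\chi(\mathrm{Frob}_{\bar P})$ is precisely the content of \cite[Lem.~5.12]{BRI15} that the paper invokes, and your claim that the residue is non-trivial is what the paper obtains from \cite[Lem.~6.3]{BRI15}.

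One point needs sharpening. A non-trivial order-$3$ character of $\F_v(\bar C)$ does not automatically correspond to a geometrically connected cover: if $\chi|_{\bar C}$ factored through $\mathrm{Gal}(\bar{\F}_v/\F_v)$, the cover would be geometrically disconnected, Lang--Weil/Hasse--Weil on the cover and its twists would not apply, and in fact $\chi(\mathrm{Frob}_{\bar P})$ would take a single value on all $\F_v$-rational $\bar P$, so surjectivity would fail outright. The tame-symbol computation you defer must therefore establish \emph{geometric} non-triviality of the restricted residue (equivalently, that it remains non-trivial after restriction to the geometric Galois group), not merely non-triviality. This stronger statement is exactly what is packaged in the reference to Bright; with that correction your argument goes through.
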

\begin{proof}
We first show that, for $q_v$ large enough, the evaluation map
\begin{equation}\label{eqn:prolific}
\ev_\calA\colon X(K_{v})\to\Br(K_{v})[3]
\end{equation}
is surjective. Taking the $\mathcal{O}_v$-models $\mathcal{X}$ and $\mathcal{C}$ for $X$ and $C$ respectively defined by the given equations viewed over $\O_v$, note that $\mathcal{X}_{\F_v}$ is a projective cone over the cubic curve $\mathcal{C}_{\F_v}$. Then the surjectivity of \eqref{eqn:prolific} follows from either \cite[Thm.~6.5]{BRI15} or \cite[\S5]{CTKS}. We give an adaptation of the proof of the first cited theorem here. Denote by $\calX_{\F_v}^{\textnormal{sm}}$ the smooth locus of the special fibre of $\calX$. Consider the residue map $\partial\colon\Br(X)\to \mathrm{H}^1_{\textrm{\'et}}(\calX_{\F_v}^{\textnormal{sm}},\Q/\Z)$ which associates $\calA$ to an \'etale cover of $\calX_{\F_v}^{\textnormal{sm}}$ \cite[\S5.1]{BRI15}. Since $\calX_{\F_v}$ is a cone over the curve $\mathcal{C}_{\F_v}$, the image $\partial(\calA)$ has order $3$ \cite[Lem.~6.3]{BRI15} and thus corresponds to a degree-$3$ \'etale cover $Y\to\calX_{\F_v}^{\textnormal{sm}}$. Suppose $\beta\in\Br(K_v)[3]$ is such that the twisted cover $Y^\beta$ has an $\F_v$-point mapping to a point $P_v\in\calX_{\F_v}(\F_v)$. Then for any lift $P\in \calX(\O_v)$ of $P_v$, we have $\langle \calA,P\rangle=\beta$ \cite[Lem.~5.12]{BRI15}. By the Lang--Weil estimate, the existence of such $P$ is ensured for $q_v$ sufficiently large.

Since $\calX_{\F_v}^{\textnormal{sm}}$ is isomorphic to $\calC_{\F_v}\times\A^1$ and $\calC_{\F_v}$ is proper, there exists a degree-$3$ \'etale cover $D \to \calC_{\F_v}$ such that $Y^\beta=D\times_{\calC_{\F_v}}\calX_{\F_v}^{\textnormal{sm}}$. Together with the inclusion $\calC_{\F_v}\subset \calX_{\F_v}^{\textnormal{sm}}$, we have the diagram
\begin{equation*}
\begin{tikzcd}[column sep=0.5cm,row sep=0.5cm]
D \ar[r] \ar[d] & D\times \calX_{\F_v}^{\textnormal{sm}} \ar[r] \ar[d] & D \ar[d]\\
\calC_{\F_v} \ar[r] & \calX_{\F_v}^{\textnormal{sm}} \ar[r] & \calC_{\F_v},
\end{tikzcd}
\end{equation*}
where the composition of the horizontal arrows are isomorphisms. If $q_v$ is large enough, then $D$ has an $\mathbb{F}_v$-point by the Hasse--Weil bound \cite[Cor.~7.2.1]{POO} (note that $D$ is a curve of genus one). Lifting the image of such a point in $\calC_{\F_v}$ to a point $P\in \calC(\calO_v)$, we conclude using the last paragraph that $\langle \calA, P\rangle=\beta$.
\end{proof}
\begin{proof}[Proof of Corollary \ref{cor:projlogfano}]
By Section \ref{sec:reduction}, we can assume that $D=H$ as in \eqref{eqn:n+2}.

Let us first consider the case $n=2$. Then $(\P^2,D)$ is log Fano if and only if $m \leq 3$. The result for $m=2$ follows from Corollary \ref{cor:logfanoquadric}, since weak approximation with Brauer--Manin obstruction holds for quadrics, so let $m=3$. Fix a place $v_0$ such that $q_{v_0}>B$ for $B$ as in Lemma \ref{lem:projlogfano}. We show that $V_{\{v_0\}}$ is dense in $\pi(\mathfrak{X}(\A_K))$. Let $T'$ be any finite set of places not including $v_0$. Define $R_v,\sigma_v,U_v$ for $v\in T'$ and $R_0$ as in the proof of Proposition \ref{prop:cwadiagonal}. As shown in the proof of Corollary \ref{cor:logfanoquadric}, $\pi(\mathfrak{X}(K))$ is dense in $\pi(\mathfrak{X}(\A_K))$, so there exists $R=[a_0:a_1:a_2:a_3]\in \pi(\mathfrak{X}(K))$ with $a_0a_1a_2a_3\neq0$ such that
\begin{enumerate}[label=(\roman*)]
\item $R$ lies in $U_v$ for each $v\in T'$,
\item $R$ approximates $R_0$ in $\P^{n+1}(K_v)$ for each $v\notin T' \cup \{v_0\}$ dividing $3$,
\item $v_0(a_0)=1$ and $v_0(a_i)=0$ for all other $i>0$, and
\item $a_1/a_2$ is not a cube in $K$.
\end{enumerate}
The cubic surface $X:=\mathfrak{X}_R$ given by
$$
a_0x_0^3+a_1x_1^3+a_2x_2^3+a_3x_3^3=0
$$
contains a $K$-point since $R\in \pi(\mathfrak{X}(K))$. We claim that $\Br(X)/\Br_0(X)$ is either trivial or isomorphic to $\Z/3\Z$.

Let $\zeta$ be a cube root of unity. By \cite[Prop.~3.1]{U14} (see also \cite[\S1]{CTKS}), we have $\Br(X_{K(\zeta)})/\Br_0(X_{K(\zeta)})\isom \Z/3\Z$. Consider the restriction and corestriction maps
$$\res\colon \Br(X)/\Br_0(X)\to \Br(X_{K(\zeta)})/\Br_0(X_{K(\zeta)}),$$
$$\cores\colon \Br(X_{K(\zeta)})/\Br_0(X_{K(\zeta)})\to \Br(X)/\Br_0(X).$$
The composition $\cores\circ\res$ is multiplication by $[K(\zeta):K]=2$, which implies that the prime-to-$2$ part of $\Br(X)/\Br_0(X)$ injects to $\Z/3\Z$. Next, let $L=K(\sqrt[3]{a_0},\ldots,\sqrt[3]{a_3})$ so that $[L:K]$ is a power of $3$. Then $X_L$ is $L$-rational (see \cite[\S13.7]{HW08}), so $\Br(X_L)/\Br_0(X_L)=0$. Considering the restriction and corestriction maps again for this extension $L/K$, we find that the $2$-primary part of $\Br(X)/\Br_0(X)$ is trivial. Altogether, we deduce that the  above claim holds.

 Let $\calA\in\Br(X)[3]$ be a generator for this quotient (such an element exists since $\Br_0(X)\isom\Br(K)$ is divisible).
We now construct a point $\{M_v\}\in X(\A_K)$ as follows:
\begin{enumerate}[label=(\roman*)]
\item For each $v\in T'$, let $M_v=\sigma_v(R)$.
\item For each $v\notin T'\cup S\cup\{v_0\}$ choose any point $M_v\in X(K_v)$ such that $\rho(M_v)\in(\P^{n}_{\O_S},\calD)(\O_v)$ using Lemma \ref{lem:campanaHypersurface}.
\item For each $v\in S\setminus T'$ choose any point $M_v\in X(K_v)$.
\end{enumerate} 
Let $C\subset X$ be the curve given by $x_0=0$. If $\calA\notin\Br_0(X)$, then by Lemma \ref{lem:projlogfano}, the restricted evaluation map
$$\ev_\calA\colon C(K_{v_0})\hookrightarrow X(K_{v_0})\to\Z/3\Z$$
is surjective. (The reason for restricting to $C$ instead of $X$ is to guarantee that we obtain a Campana point, as we will see later.) Hence, there exists a point $M_{v_0}\in C(K_{v_0})$ such that
$$
\sum_{v\neq v_0}\inv_v\calA(M_v)+\inv_{v_0} \calA(M_{v_0})=0.
$$
If $\calA\in\Br_0(X)$, then choosing any point $M_{v_0}\in C(K_{v_0})$, the equation above holds by class field theory. Hence, in either case, the combined adelic point $\left(M_v\right)\in X(\A_K)$ is then orthogonal to $\Br(X)$. Since Conjecture \ref{conj:ct} is assumed to hold, there exists $M\in X(K)$ approximating $M_v$ for each (i) $v\in T'$ and (ii) $v$ such that $v(a_i)\neq v(a_j)$ for some $i,j$.  Similarly to the proof of Proposition \ref{prop:cwadiagonal}, we have $\rho(M)\in(\P^n_{\O_S},\calD)(\O_S)$ using openness of Campana points and (ii) above, together with Lemma \ref{lem:campanaHypersurface} (note that $\rho\left(M_{v_0}\right) = [0:a_1 b_1^3: a_2 b_2^3: a_3 b_3^3] \in (\mathbb{P}^3_{\mathcal{O}_S},\mathcal{H})(\mathcal{O}_{v_0})$).

Suppose now that $n\geq3$ and $(\P^n,D)$ is log Fano. Then for any $P\in\P^{n+1}(K)$ such that the fibre $\mathfrak{X}_P$ of $\rho$ is smooth, we have $\Br(\mathfrak{X}_P)=\iota^*\Br(K)$, where $\iota\colon \mathfrak{X}_P\to \Spec K$ is the structure morphism (see \cite[Prop.~A.1]{PV}). Moreover, as $\mathfrak{X}_P$ is Fano, it is geometrically rationally connected (see Note \ref{note:fano}). Hence Conjecture \ref{conj:ct} together with $\Br(\mathfrak{X}_P)=\iota^*\Br(K)$ imply that $\mathfrak{X}_P(K)$ is dense in $\mathfrak{X}_P(\A_K)$. Thus, $\mathfrak{X}_P$ satisfies weak approximation. We conclude that the hypotheses of Proposition \ref{prop:cwadiagonal} are satisfied with $T=\emptyset$, and so $(\P^n_{\calO_S},\calD)$ satisfies CWA.
\end{proof}
\subsection{Quadratic point on the projective line}
We conclude this section with a case where the orbifold divisor is not a collection of hyperplanes. This will be used in Section \ref{sec:fib} for applications to conic bundles. 

Recall that, given a curve $C$ over a number field $K$ and a closed point $P \in C$, we define the \emph{degree} of $P$ to be $\deg\left(P\right) = [K\left(P\right):K]$.

\begin{proposition}\label{prop:deg2P1}
Let $P\in\P^1$ be a closed point of degree $2$, and let $D=(1-1/m)P$ for some integer $m \geq 2$. Then $(\P^1_{\calO_S},\calD)$ satisfies CWA.
\end{proposition}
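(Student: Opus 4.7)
After a projective linear change of coordinates on $\P^1$, I may assume $P = V(q)$ with $q(x,y) = x^{2} - dy^{2}$ for some non-square $d \in K^{\times}$; write $L = K(\sqrt d)$. Using the set-theoretic bijection $\P^{1}(K) \cong L^{\times}/K^{\times}$ given by $[a:b] \mapsto a - \sqrt d\,b$, the Campana condition for a coprime pair $(a,b) \in \O_v^{2}$ at $v \notin S$ becomes the following: for every place $w$ of $L$ above $v$, $w(a - \sqrt d\,b) \in \{0\}\cup\Z_{\geq m}$. At $v$ inert in $L$, coprimality of $(a,b)$ forces $w(a - \sqrt d\,b) = 0$, so the genuine constraints arise only at split and ramified places of $L/K$.

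The plan is to adapt the construction in the proof of Proposition~\ref{prop:projcampdense} to the field $L$. Enlarging $T$ if necessary, I may assume $T \cap S = \emptyset$ and that $T$ contains every place of $K$ ramifying in $L/K$ together with every $v$ with $v(d) \neq 0$. For each $v \in T_f$ with local Campana point $[a_v:b_v]$, put $\mu_v := a_v - \sqrt d\,b_v$ and $e_w := w(\mu_v) \in \{0\}\cup\Z_{\geq m}$ at each place $w \mid v$ of $L$. Using strong approximation in $L^{\times}$, choose uniformisers $\pi_w \in \O_{L,S_L}$ for each $w \in T_{L,f}$ that are $u$-adic units for every other non-archimedean place $u$ of $L$, set $\tau := \prod_{w \in T_{L,f}} \pi_w^{e_w}$, and fix an integer $d' \geq m$ satisfying $\gamma^{d'} \equiv \gamma \pmod{\pi_w^{N}}$ for every unit $\gamma \in \O_{L_w}^{\times}$ and each $w \in T_{L,f}$ (for example, $d' = 1 + \prod_{w \in T_{L,f}} |(\O_{L,w}/\pi_w^{N})^{\times}|$).

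Strong approximation in $\O_{L,S_L}$ produces $\gamma \in \O_{L,S_L}$ with $\gamma \equiv \mu_v/\tau \pmod{\pi_w^{N}}$ at each $w \in T_{L,f}$; set $\lambda := \gamma^{d'}\tau \in L^{\times}$ and let $[\alpha:\beta] \in \P^{1}(K)$ be its image under $L^{\times}/K^{\times} \cong \P^{1}(K)$. At each $v \in T$ and $w \mid v$ we have $w(\lambda - \mu_v) \geq N$ by construction, so $[\alpha:\beta]$ approximates $[a_v:b_v]$ in $\P^{1}(K_v)$. For $v \notin S \cup T$ and any $w \mid v$, one computes $w(\lambda) = d'\cdot w(\gamma) + w(\tau) = d'\cdot w(\gamma) \in d'\Z_{\geq 0} \subset \{0\}\cup\Z_{\geq m}$ (since $\gamma \in \O_{L,S_L}$ gives $w(\gamma) \geq 0$ at $w \notin S_L$, and $\tau$ has support in $T_L$), verifying the Campana condition at $v$.

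The main obstacle is enforcing the Campana condition at the cofinitely many places outside $S \cup T$, and this is exactly what raising $\gamma$ to the $d'$-th power accomplishes: it forces each $w(\lambda)$ outside $T_L$ into $d'\Z_{\geq 0} \subset \{0\}\cup\Z_{\geq m}$. Archimedean places are handled exactly as in the proof of Proposition~\ref{prop:projcampdense}, working over $L_v$ and using $d'$-th roots with $d'$ chosen odd if necessary. The fact that every $\lambda \in L^{\times}$ automatically produces a $K$-rational point via the bijection $\P^{1}(K) \cong L^{\times}/K^{\times}$ means no separate Galois-descent step is required.
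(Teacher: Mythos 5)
Your approach is genuinely different from the paper's. The paper works directly with the homogeneous polynomials $x_0, x_1$ defined by $(t_0+\sqrt{a}t_1)^m(t_2+\sqrt{a}t_3)^{m+1}=x_0+\sqrt{a}x_1$, then applies Wei's strong-approximation result for the open $\A^4 \setminus V(x_0,x_1)$ (off one auxiliary inert place $v_0$); the key to the $m$-full structure is the polynomial identity $x_0^2-ax_1^2=({z'}_0^2-a{z'}_1^2)^m({y'}_0^2-a{y'}_1^2)^{m+1}$, while the factorisation through $U(\O_w)$ guarantees coprime integral coordinates. You instead pass to the quadratic extension $L=K(\sqrt{d})$, identify $\P^1(K) \cong L^\times/K^\times$, and reuse the ``raise to a $d'$-th power'' device from Proposition~\ref{prop:projcampdense} together with ordinary strong approximation in $\O_{L,S_L}$. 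This is a cleaner and more elementary route — no appeal to Wei's theorem and no auxiliary place $v_0$ — and it also makes the role of the $m$-full condition transparent. However, there is one real gap, located at dyadic places, described below.

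The Campana condition at $v \notin S$ concerns the valuations $w(\alpha'-\sqrt{d}\beta')$ for a \emph{$v$-coprime} representative $(\alpha',\beta')$, whereas you verify $w(\lambda) \in d'\Z_{\geq 0}$ for $\lambda = \gamma^{d'}\tau \in \O_{L,S_L}$. To pass from $\lambda$ to $(\alpha,\beta)$ one writes $\alpha=\mathrm{tr}(\lambda)/2$ and $\beta=(\bar\lambda-\lambda)/(2\sqrt{d})$, so $(\alpha,\beta)$ need only lie in $\O_{K,S}[1/(2\sqrt{d})]$. After clearing the local denominator, $w_i(\alpha'-\sqrt{d}\beta') = w_i(\lambda) - \min(v(\alpha),v(\beta))$, and $\min(v(\alpha),v(\beta))$ equals $\min(w_1(\lambda),w_2(\lambda))$ only when $v \nmid 2$ and $v(d)=0$. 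At a place $v \mid 2$ that is split in $L$ but unramified (so $d \in (K_v^\times)^2$ with $v(d)=0$), the shift can equal $v(2) > 0$: if, say, $w_1(\lambda)=0$ and $w_2(\lambda)=k d'$ with $k\geq 1$, one gets $\min(v(\alpha),v(\beta)) = -v(2)$, and then $w_1(\alpha'-\sqrt{d}\beta') = v(2)$, which lies in neither $\{0\}$ nor $\Z_{\geq m}$ once $m > v(2)$. These places are not caught by your enlargement of $T$ (you added only ramified places and places with $v(d)\neq 0$). The paper sidesteps this entirely because its $x_0,x_1$ are $\O_S$-polynomial expressions and the condition $R \in U(\O_w)$ forces $\min(w(x_0),w(x_1))=0$. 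Your gap is easily repaired — enlarge $T$ to contain every place above $2$ (after which, for all $v \notin S\cup T$ one has $v\nmid 2$, $v(d)=0$, $v$ unramified, and the inert case is automatic while the split case works as you computed). I would also phrase your passage ``verifying the Campana condition at $v$'' more carefully, explicitly reducing to coprime coordinates and invoking the identity $\min(v(\alpha),v(\beta))=\min(w_1(\lambda),w_2(\lambda))$ at the remaining places. A lesser point: asking for uniformisers $\pi_w \in \O_{L,S_L}$ that are genuine units at all other non-archimedean places requires the relevant primes to be principal, so one should first enlarge $S$ (permissible by Lemma~\ref{lem:twoModels}); alternatively, as the paper does, it suffices to ask that the supports of the $\pi_w$ outside $S_L$ be pairwise disjoint.
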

\begin{proof}
Since $S$ is arbitrary, we may assume by Lemma \ref{lem:twoModels} (by applying a suitable automorphism of $\P^1$) that $P$ is given by $x_0^2-ax_1^2=0$, where $a\in \O_S$ is square-free. At any place $v \not\in S$, a point $[x_0:x_1] \in \mathbb{P}^1\left(K_v\right)$ is a local Campana point if and only if
\begin{equation*}
v\left(x_0^2 - ax_1^2\right) - \min_i\{v\left(x_i^2\right)\} \in \Z_{\geq m} \cup \{0,\infty\}.
\end{equation*}

Let $T$ be a finite set of places. Let $Q_v\in (\P^1_{\calO_S},\calD)(\O_{v})$ for $v\in T\setminus S$ and $Q_v\in \P^1(K_v)$ for $v\in T\cap S$. Our goal is to find $[x_0:x_1] \in (\P^1_{\calO_S},\calD)(\O_S)$ approximating all $Q_v$ simultaneously. By slightly perturbing the $Q_v$ if need be, we may assume that they are not contained in $P$.

Write $Q_v=[y_{v,0}:y_{v,1}]$ where $y_{v,i}\in \O_v$. Set
$$
z_{v,0}=\frac{y_{v,0}}{y_{v,0}^2-ay_{v,1}^2},\quad z_{v,1}=\frac{-y_{v,1}}{y_{v,0}^2-ay_{v,1}^2}.
$$
Let $A=K[t_0,t_1,t_2,t_3]$ and $f_0,f_1\in A$ be defined by
$$
(t_0+\sqrt{a}t_1)^m(t_2+\sqrt{a}t_3)^{m+1}=f_0+\sqrt{a}f_1.
$$
Let $Z\subset \A^4 = \Spec A$ be defined by $f_0=f_1=0$.
We claim that $Z$ is of codimension $2$ in $\A^4$. Indeed, note that, if $f_0$ and $f_1$ have a non-constant common factor, then
$$
(t_0+\sqrt{a}t_1)^m(t_2+\sqrt{a}t_3)^{m+1} \quad \textrm{and} \quad (t_0-\sqrt{a}t_1)^m(t_2-\sqrt{a}t_3)^{m+1}
$$
must also have a non-constant common factor. This is the case if and only if
$$
(t_0+\sqrt{a}t_1)(t_2+\sqrt{a}t_3) \quad \textrm{and} \quad (t_0-\sqrt{a}t_1)(t_2-\sqrt{a}t_3)
$$
have a non-constant common factor, which is clearly false.
By \cite[Lem.~1.1]{W}, the open set $U=\A^4\setminus Z$ satisfies strong approximation off a place $v_0$, so, setting $\calU = \mathbb{A}^4_{\O_K} \setminus \calZ$ (with $\calZ$ the Zariski closure of $Z$ in $\A^4_{\O_K}$), the image
$$
U(K)\to \prod_{w\in S\cup T\setminus\{v_0\}}U(K_w)\times\prod_{w\notin S\cup T\cup\{v_0\}} \calU(\O_w)
$$
is dense. Choose $v_0\notin T$ to be a non-archimedean place with $a \notin K_{v_0}^2$ and $a \in \O_{v_0}^\times$. Then there exists $R=(z_0',z_1',y_0',y_1')\in U\left(K\right)$ that approximates $\left(z_{v,0},z_{v,1},y_{v,0},y_{v,1}\right) \in U\left(K_v\right)$ at every $v\in T$, and lies in $\calU(\O_w)$ for all non-archimedean places $w\notin S\cup T \cup\{v_0\}$. Let $x_i=f_i(R)$. The latter condition tells us that $\min(w(x_0),w(x_1))=0$ for $w\notin S\cup T \cup\{v_0\}$.

Since
$$(z_0'+\sqrt{a}z_1')(y'_0+\sqrt{a}y'_1)=z_0'y'_0+az_1'y'_1+\sqrt{a}(z_0'y'_1+z_1'y'_0)$$
and
$$z_{v,0}y_{v,0}+az_{v,1}y_{v,1}=1,\quad z_{v,0}y_{v,1}+z_{v,1}y_{v,0}=0,$$
we see that $x_i$ approximates $y_{v,i}$ in $K_v$. Observe that
\begin{align*}
x_0^2-ax_1^2&=(x_0+\sqrt{a}x_1)(x_0-\sqrt{a}x_1)\\
&=(z_0'+\sqrt{a}z_1')^m(z_0'-\sqrt{a}z_1')^m(y'_0+\sqrt{a}y'_1)^{m+1}(y'_0-\sqrt{a}y'_1)^{m+1}\\
&=({z'}_0^2-a{z'}_1^2)^m({y'}_0^2-a{y'}_1^2)^{m+1}.
\end{align*}
Since $\min(w(x_0),w(x_1))=0$ for any place $w\notin S\cup T \cup \{v_0\}$, we have
$$
w(x_0^2-ax_1^2)-\min\{w(x_i^2)\}=
\begin{cases} 0 \textrm{ or at least $m$} &\textnormal{if }w\notin S\cup\{v_0\},\\
0&\textnormal{if }w=v_0.
\end{cases}
$$
The case for $w=v_0$ above follows from the fact that $a\notin K_{v_0}^2$ and $a \in \O_{v_0}^\times$. Hence, the Campana conditions are satisfied and we have $[x_0:x_1]\in(\P^1_{\calO_S},\calD)(\O_S)$.
\end{proof}
\begin{remark}
Let $(\P^1,D)$ be as in Proposition \ref{prop:deg2P1}. While Lemma \ref{lem:twoModels} and Proposition \ref{prop:deg2P1} guarantee that any model $(\calX,\calF)$ of $(\P^1,D)$ satisfies CWA, it is possible that $(\calX,\calF)$ does not have local Campana points. To construct such an example, one can take a model where $\calX_v(\F_v) = \left(\calF_{\red}\right)_v(\F_v)$ but $\calF(\calO_v)=\emptyset$. Then for any $R\in X\left(K_v\right)$, we have $n_v(\calF,R)>0$, but it is impossible for $n_v(\calF,R)$ to be arbitrarily large by Hensel's lemma. We have the following concrete example.

Let $\calX\subset \P^2_{\Z}$ be the conic given by $x^2+y^2-9z^2=0$, and denote by $X \subset \P^2_{\Q}$ the generic fibre. Let $F=(1-\frac{1}{3}) Q$ with $Q = X \cap H$ for $H$ the line $x-y = 0$. Let $\calQ$ be the closure of $Q$ in $\calX$. It is easily seen that $\calQ_{\O_v} = \left(\calX \cap \calH\right)_{\O_v}$, where $\calH$ is the closure of $H$ in $\P^2_{\Z}$, cut out by the same equation over $\Z$. Thus, for $R = [a:b:c]\in\calX(\Z_3)$ with $a,b$ and $c$ coprime $3$-adic integers, we have
$$
n_3\left(\calQ,R\right) = v_3\left(a-b\right).
$$
By reducing the equation $a^2 + b^2 = 9c^2$ modulo $3$, it follows that $v_3\left(a\right)$ and $v_3\left(b\right)$ are both positive, thus $\min\{v_3\left(a\right),v_3\left(b\right),v_3\left(c\right)\} = v_3\left(c\right) = 0$ by the assumption of coprimality. Writing $a = 3a'$ and $b = 3b'$, we get $\left(a'\right)^2 + \left(b'\right)^2 = c^2$. If $v_3\left(a-b\right) \geq 3$, then $v_3\left(a' - b'\right) \geq 2$, but then $\left(A,B\right) = \left(a',2c\right)$ is a solution to $A^2 + B^2 \equiv 0 \pmod 9$, and it is easily seen that this congruence has no solutions with $3 \nmid B$.  
We deduce that $n_3\left(\calQ,R\right) \leq 2$, hence $\left(\calX,\calF\right)\left(\Z_3\right) = \emptyset$.
\end{remark}
\section{Fibrations and blowups}\label{sec:fib}
\subsection{CHP for fibrations}
In this section we explore the links between fibrations and the Campana Hilbert property.

The following result is a slight variant of \cite[Thm.~1.1]{BSFP}.
\begin{theorem} \label{thm:bsfp}
Let $f\colon X\to S$ be a dominant morphism of varieties over $K$. Let $B\subset S(K)$, $A\subset X(K)$. Suppose that the set $\{s\in B:f^{-1}(s)(K)\cap A\textnormal{ is not thin}\}$ is not thin. Then $A$ is not thin.
\end{theorem}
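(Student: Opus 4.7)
The plan is to prove the contrapositive: assume $A \subset X(K)$ is thin, and show that $B_0 := \{s \in B : f^{-1}(s)(K) \cap A \text{ is not thin in } f^{-1}(s)(K)\}$ is thin in $S(K)$, contradicting the hypothesis. Write $A \subset \bigcup_{i=1}^N T_i$ with each $T_i$ of type I or II; since a finite union of thin sets is thin, it suffices to show that each $B_i := \{s \in S(K) : T_i \cap f^{-1}(s)(K) \text{ is not thin in } f^{-1}(s)(K)\}$ is thin, because $B_0 \subset \bigcup_i B_i$. Working irreducible component by irreducible component, I may assume both $X$ and $S$ are irreducible.

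For type I, write $T_i = W_i(K)$ with $W_i \subsetneq X$ irreducible closed. If $\overline{f(W_i)} \subsetneq S$, then $B_i \subset \overline{f(W_i)}(K)$ is thin of type I in $S$. Otherwise $f|_{W_i}$ is dominant, and Chevalley's theorem on fiber dimension yields a dense open $U \subset S$ on which $\dim(W_i \cap f^{-1}(s)) = \dim W_i - \dim S < \dim X - \dim S = \dim f^{-1}(s)$. For $s \in U(K)$ the intersection $W_i \cap f^{-1}(s)$ is a proper closed subset of $f^{-1}(s)$, so $T_i \cap f^{-1}(s)(K)$ is thin of type I in $f^{-1}(s)(K)$, and therefore $B_i \subset (S \setminus U)(K)$ is type I thin.

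For type II, write $T_i = \pi_i(V_i(K))$ with $V_i$ normal and irreducible and $\pi_i \colon V_i \to X$ finite surjective of degree $\geq 2$. I will base change to the fiber: set $V_{i,s} := V_i \times_X f^{-1}(s)$ with its finite map to $f^{-1}(s)$, and decompose $V_{i,s}$ into irreducible components. A component that fails to dominate $f^{-1}(s)$ contributes a type I thin subset of $f^{-1}(s)(K)$ to $T_i \cap f^{-1}(s)(K)$; a dominating component of degree $\geq 2$ contributes a type II thin subset. Hence $T_i \cap f^{-1}(s)(K)$ can fail to be thin only when some component of $V_{i,s}$ maps birationally onto $f^{-1}(s)$. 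Applying classical Hilbert irreducibility to the Galois closure of $K(V_i)/K(X)$ viewed over $K(S)$ via $f$ shows that the set of such exceptional $s$ is thin in $S(K)$, so $B_i$ is thin.

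The main obstacle is the Hilbert-irreducibility step in the type II case, which requires carefully tracking specialisations of the Galois covers obtained from the function field extension to guarantee that components of $V_{i,s}$ do not degenerate into sections of $\pi_i$ above $f^{-1}(s)$ except for $s$ in a thin set of $S(K)$. This is essentially the content of \cite[Thm.~1.1]{BSFP}, and the present statement differs from it only in restricting to the subset $B \subset S(K)$. The remainder is bookkeeping: combining the finitely many $B_i$ yields a thin subset of $S(K)$ containing $B_0$, contradicting the assumption that $B_0$ is not thin.
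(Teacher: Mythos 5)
Your proposal is correct and takes essentially the same route as the paper: both reduce to the argument of \cite[Thm.~1.1, Lem.~3.2]{BSFP}, noting that it carries over verbatim once $X(K)$ is replaced by $A$ and the non-thin set on the base is replaced by $\{s \in B : f^{-1}(s)(K) \cap A \text{ not thin}\}$. Your write-up simply fleshes out the skeleton of the BSFP proof (contrapositive, decomposition into type~I and type~II pieces, dimension count for type~I, Hilbert-irreducibility for type~II) before deferring the genuinely technical type~II step to \cite{BSFP}, which is exactly the step the paper also treats as a black box.
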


\begin{proof}
The result follows on replacing $X(K)$ by $A$ and $\Sigma$ by $\{s \in B: f^{-1}\left(s\right)\left(K\right) \cap A \textrm{ is not thin}\}$ in the proofs of \cite[Thm.~1.1]{BSFP} and \cite[Lem.~3.2]{BSFP}.
\end{proof}
\begin{lemma}\label{lem:fibration}
Let $\left(X,D\right)$ be a Campana orbifold over a number field $K$ with associated $\calO_S$-model $\left(\calX,\calD\right)$ for some finite set of places $S$ of $K$ containing $S_{\infty}$. Let $Y$ be a proper normal $K$-variety with $\calO_S$-model $\calY$. Let $\phi: X \rightarrow Y$ be a dominant morphism that extends to $\Phi: \calX \rightarrow \calY$. Write $D = \sum_{\alpha \in \calA}\epsilon_\alpha D_\alpha$, where each $D_\alpha$ is a prime divisor on $X$. Assume that each $\calD_\alpha$ is flat over $\calY$.

Let $P \in Y\left(K\right)$ with $Z = \phi^{-1}\left(P\right)$ normal. Denote by $\calZ$ the Zariski closure of $Z$ in $\calX$. Then $Z \cap D$ is an orbifold divisor on $Z$, the pair $\left(\calZ,\calZ \cap \calD\right)$ is an $\O_S$-model for $\left(Z,Z\cap D\right)$, and we have
$$
\left(\calZ,\calZ \cap \calD\right)\left(\O_S\right) = \left(\calX,\calD\right)\left(\O_S\right) \cap Z\left(K\right).
$$
\end{lemma}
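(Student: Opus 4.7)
The plan is to exploit the flatness hypothesis on each $\calD_\alpha/\calY$ to identify $\calZ \cap \calD_\alpha$ with the Zariski closure in $\calZ$ of $D_\alpha \cap Z$, and then to deduce the equality of Campana point sets by a local calculation of intersection multiplicities.

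First I would verify that $\calZ$ is a flat proper $\calO_S$-model of $Z$. The $K$-point $P \in Y(K)$ extends uniquely to a section $\calP \in \calY(\calO_S)$ by the valuative criterion applied to the proper $\calO_S$-scheme $\calY$. Let $\calW := \calX \times_\calY \Spec \calO_S$ denote the scheme-theoretic fibre of $\Phi$ over $\calP$; this is a closed subscheme of $\calX$ whose generic fibre equals $Z$, so $\calZ \subseteq \calW$. Properness of $\calZ / \calO_S$ is inherited from $\calX$, and $\calO_S$-flatness of $\calZ$ follows from its being the Zariski closure of the generic fibre of the $\calO_S$-flat scheme $\calX$.

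The key step is then to show $\calZ \cap \calD_\alpha = \calW \cap \calD_\alpha$ for each $\alpha \in \calA$. By base change of the flat morphism $\calD_\alpha \to \calY$ along $\calP$, the intersection $\calW \cap \calD_\alpha = \calD_\alpha \times_\calY \Spec \calO_S$ is $\calO_S$-flat, so it coincides with the scheme-theoretic closure in $\calX$ of its generic fibre $D_\alpha \cap Z$. Since $\calZ$ is closed in $\calX$ and contains $D_\alpha \cap Z$, this closure lies in $\calZ$, giving $\calW \cap \calD_\alpha \subseteq \calZ \cap \calD_\alpha$; the reverse inclusion is immediate from $\calZ \subseteq \calW$. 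The same flatness forces $Z \not\subset D_\alpha$: every component of $Z$ has dimension at least $\dim X - \dim Y$ by the fibre dimension theorem, while equidimensionality of $\calD_\alpha \to \calY$ forces $D_\alpha \cap Z$ to have dimension $\dim X - 1 - \dim Y$. Consequently $Z \cap D_\alpha$ is a genuine effective Cartier divisor on $Z$; decomposing $(Z \cap D_\alpha)_{\mathrm{red}}$ into primes with coefficient $\epsilon_\alpha$ exhibits $Z \cap D$ as an orbifold divisor on $Z$, and the identification above shows $\calZ \cap \calD$ is its natural closure in $\calZ$, so $(\calZ, \calZ \cap \calD)$ is an $\calO_S$-model of $(Z, Z \cap D)$.

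Finally, I would prove the equality of Campana point sets by a local pullback computation. For $P' \in Z(K) \subseteq X(K)$ and $v \notin S$, the induced integral point $\calP'_v$ factors through $\calZ(\calO_v) \subseteq \calX(\calO_v)$. For any local defining equation $f_\alpha$ of $\calD_\alpha$ near the image of $\calP'_v$, one has $(\calP'_v)^* f_\alpha = (\calP'_v)^* (f_\alpha|_{\calZ})$, so $n_v(\calD_\alpha, P') = n_v(\calZ \cap \calD_\alpha, P')$, and the analogous equality for $v$-adic components follows by applying the same pullback argument to the irreducible components of $\calD_{\mathrm{red}}$ over $\Spec \calO_v$. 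Hence both conditions of Definition \ref{def:CP} are equivalent for $(\calX, \calD)$ and $(\calZ, \calZ \cap \calD)$ at any point of $Z(K)$, yielding the desired equality. The main obstacle will be the flatness identification in the previous paragraph: the morphism $\calX \to \calY$ need not be flat, so $\calW$ may contain spurious vertical components, but flatness of each $\calD_\alpha \to \calY$ is exactly what makes $\calW \cap \calD_\alpha$ flat over $\calO_S$ and hence equal to the scheme-theoretic closure of $D_\alpha \cap Z$.
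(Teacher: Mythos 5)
Your proposal is correct and follows essentially the same route as the paper's proof: show $\calZ \cap \calD_\alpha$ is the $\calO_S$-flat model of $Z \cap D_\alpha$ (the paper packages this as a cube of fibre products, while you argue directly via base-changed flatness of $\calW \cap \calD_\alpha = \calD_\alpha \times_{\calY}\Spec\calO_S$ and the uniqueness of the flat closure over a Dedekind base), and then deduce the equality of Campana point sets by observing that pulling back $\calD_{\alpha_v}$ along $\calQ_v$ factors through $\calZ$, so the local intersection multiplicities coincide. Your version makes explicit the step (that the right-hand square of the paper's cube is a fibre product) which the paper simply asserts, but the underlying argument is the same.
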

\begin{proof}
We first show that $\left(\calZ,\calZ \cap \calD\right)$ is an $\O_S$-model for $\left(Z,Z\cap D\right)$. Flatness and properness of $\calZ$ are clear. Consider the following diagram, where the three vertical squares away from the front one are fibre products.
\begin{equation}\label{cd:cubes}
\begin{tikzcd}[column sep=0.2cm,row sep=0.2cm]
& D_\alpha \ar[rr] \ar[dd] && \calD_\alpha \ar[dd]\\
Z\cap D_\alpha \ar[ru] \ar[rr, crossing over] \ar[dd] && \calZ\cap \calD_\alpha \ar[ru] \\
& Y \ar[rr] && \calY\\
\Spec K \ar[rr] \ar[ru] && \Spec \calO_S \ar[ru] \ar[uu, crossing over, leftarrow]
\end{tikzcd}
\end{equation}
It follows that the front square is also a fibre product. Flatness of $\phi|_{D_{\alpha}}$ implies that $Z \cap D_{\alpha}$ is a divisor on $Z$. To show that $\calZ \cap \calD_\alpha$ has dense generic fibre, it suffices to show that each irreducible component of $\calZ \cap \calD_\alpha$ dominates $\Spec \calO_S$, since then \cite[Exercise~3.1.3]{LIU} implies that the generic fibre of each irreducible component is dense. Since $\Spec \calO_S$ is a Dedekind scheme, \cite[Prop.~4.3.9]{LIU} shows that this is equivalent to flatness of $\calZ \cap \calD_\alpha$ over $\Spec\calO_S$. Since $\calD_\alpha$ is flat over $\calY$, applying preservation of flatness under base change to the rightmost face of the cube on the right of \eqref{cd:cubes} shows that $\calZ \cap \calD_\alpha$ is flat over $\Spec\calO_S$.

It only remains to check that, for any $Q \in \left(Z \setminus D_{\textrm{red}}\right)\left(K\right)$ and $\alpha_v \in \calA_v$, we have
\begin{equation} \label{eqn:fiber}
n_v\left(\calZ \cap \calD_{\alpha_v},Q\right) = n_v\left(\calD_{\alpha_v},Q\right).
\end{equation}
This holds if and only if $\calQ_v^*\calD_{\alpha_v} = \calQ_v^*\left(\calZ \cap \calD_{\alpha_v}\right)$, which is clear since $\calQ \subset \calZ$.
\end{proof}
\subsection{Blowing up}
In this section we study the relationship between the Campana Hilbert property and blowing up.
\begin{lemma}\label{lem:thinBirInv}
Let $\varphi\colon X\dashrightarrow Y$ be a birational map of varieties over a field $F$. Let $B\subset X(F)$ be a non-thin subset. Then $\varphi(B)\subset Y(F)$ is not thin.
\end{lemma}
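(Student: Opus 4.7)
The plan is to argue by contradiction. Suppose $\varphi(B)$ is thin, say $\varphi(B)\subset\bigcup_{i=1}^n A_i$ with each $A_i$ of type I or II in $Y(F)$. Since $\varphi$ is birational, there exist dense open subvarieties $U\subset X$ and $V\subset Y$ on which $\varphi$ restricts to an isomorphism $\varphi_0\colon U\xrightarrow{\sim}V$; the symbol $\varphi(B)$ is naturally read as $\varphi_0(B\cap U(F))$. I would then split $B = (B\cap U(F))\cup (B\setminus U(F))$, noting that the second piece sits in $(X\setminus U)(F)$ and so is automatically of type I. It can therefore be absorbed freely into any thin cover we construct.

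The core of the argument is then to show that $\varphi_0^{-1}(A_i\cap V(F))$ is thin in $X(F)$ for each $i$. For $A_i$ of type I, contained in $W_i(F)$ with $W_i\subsetneq Y$ closed, the set $\varphi_0^{-1}(W_i\cap V)$ is a proper closed subset of $U$, and its Zariski closure in $X$ is still proper closed (since $X$ is irreducible and $U$ is dense); this produces a type I piece in $X(F)$.

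For $A_i$ of type II, contained in $\pi_i(Y_i'(F))$ with $\pi_i\colon Y_i'\to Y$ generically finite dominant of degree $\geq 2$, I would pass to the open subvariety $Y_i'':=\pi_i^{-1}(V)\subset Y_i'$, which is nonempty because $\pi_i$ is dominant and $V$ is dense. The composition
\[
\tilde\pi_i\colon Y_i''\xrightarrow{\pi_i|_{Y_i''}} V\xrightarrow{\varphi_0^{-1}} U\hookrightarrow X
\]
is a morphism of varieties that is generically finite, dominant and of degree $\deg(\pi_i)\geq 2$, and the inclusion $\varphi_0^{-1}(A_i\cap V(F))\subset\tilde\pi_i(Y_i''(F))$ exhibits the corresponding piece of $B$ as type II in $X(F)$. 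Assembling these pieces with the type I contribution from $B\setminus U(F)$ gives a thin cover of $B$, contradicting the hypothesis.

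The main obstacle is really the bookkeeping in the type II case: one must check that $Y_i''$ is still a variety (geometrically integral, separated, of finite type), that $\tilde\pi_i$ really is a morphism into $X$, and that generic finiteness, dominance and degree $\geq 2$ are preserved after composing with the isomorphism $\varphi_0^{-1}$ and the open immersion $U\hookrightarrow X$. All of these reduce to standard facts about open subschemes of varieties and about degrees under composition with an isomorphism, but it is this step where the birationality of $\varphi$ is truly needed; the argument would fail for arbitrary dominant maps.
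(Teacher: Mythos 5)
Your proof is correct and follows essentially the same strategy as the paper's: pass to open subsets $U\subset X$, $V\subset Y$ where $\varphi$ restricts to an isomorphism, then transport a thin covering of $\varphi(B)$ back to $X$ by pulling back the type II covers across $\varphi_0^{-1}$ and absorbing everything outside $U$ into a type I set. The paper packages the thinness of $\varphi(B)$ slightly differently (as ``not dense after deleting finitely many type II images'' rather than an explicit finite union of type I and type II pieces, which lets it skip the separate type I case), but the substance — restricting covers to $\pi_i^{-1}(V)$, composing with $\varphi_0^{-1}$, and checking that degree and dominance are preserved — is the same.
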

\begin{proof}
Denote by $U\subset X$ an open set on which $\varphi$ is an isomorphism, and suppose that $\varphi\left(B\right) \subset Y\left(F\right)$ is thin. Then $\varphi\left(B \cap U\right) \subset \varphi\left(B\right)$ is also thin, so there exists a finite collection of generically finite dominant morphisms of varieties $f_i: W_i \rightarrow Y$, $i=1,\dots,r$ of degree $\geq2$ such that $\varphi\left(B \cap U\right) \setminus \bigcup_{i=1}^r f_i\left(W_i\left(F\right)\right)$ is not dense in $Y$. Then $\varphi\left(B \cap U\right) \setminus \bigcup_{i=1}^r f_i\left(W_i\left(F\right)\right)$ is not dense in $\varphi\left(U\right) \isom U$. Let $W_i'=f_i^{-1}(\varphi(U))$ and $f_i' = f_i|_{W_i'}$, so that $\varphi^{-1}\circ f_i'\colon W_i'\to X$ are generically finite dominant morphisms of degree $\geq 2$.
Then $(B \cap U) \setminus \bigcup_{i=1}^r \left(\varphi^{-1} \circ f_i'\right)\left(W_i'\left(F\right)\right)$
is not dense in $U$, which implies that $B$ is thin, a contradiction.
\end{proof}
\begin{mydef}
Given a Campana orbifold $(X,D)$ with $\calO_S$-model $\left(\calX,\calD\right)$, we say that the set of Campana points $(\calX,\calD)(\O_S)$ is \emph{locally not thin at $P \in \left(\calX,\calD\right)\left(\O_S\right)$} if for all open subsets $U\subset (\calX,\calD)(\A^S_K)$ containing $P$, the set $(\calX,\calD)(\O_S)\cap U\subset X(K)$ is not thin. We say that $(\calX,\calD)(\O_S)$ is \emph{locally not thin} if it is locally not thin at all $P \in \left(\calX,\calD\right)\left(\O_S\right)$.
\end{mydef}
\begin{lemma}\label{lem:blowup}
Let $(X,D)$ be a Campana orbifold with $\O_S$-model $(\calX,\calD)$. Let $P\in \left(X\setminus D_{\textrm{red}}\right)(K)$. Define
$$
Y=\Bl_P X\xrightarrow{\rho} X,\quad \calY=\Bl_\calP \calX\xrightarrow{\widetilde{\rho}} \calX,
$$
$$
F=\rho^*D,\quad \calF=\widetilde{\rho}^{*}\calD.
$$
Then $(\calY,\calF)$ is an $\O_S$-model for $(Y,F)$. Suppose that for all finite subsets $S\subset T\subset \Omega_K$ large enough, the set of $\O_T$-Campana points $(\calY_T,\calF_T)(\O_T)$ is locally not thin. Then  $(\calX,\calD)(\O_S)$ is locally not thin away from $P$.
\end{lemma}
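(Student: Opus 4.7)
The plan splits into verifying that $(\calY,\calF)$ is an $\calO_S$-model for $(Y,F)$, and then transporting non-thinness from $(\calY_T,\calF_T)(\calO_T)$ back through $\rho$ to $(\calX,\calD)(\calO_S)$. For the model claim, $\calY = \Bl_\calP\calX$ is projective over $\calX$ and hence proper over $\calO_S$, with generic fibre $Y = \Bl_P X$ since the generic fibre of $\calP$ is $\{P\}$. After enlarging $S$ if necessary so that $\calP$ is flat over $\calO_S$ and disjoint from $\calD_{\textrm{red}}$ (possible since $P \notin D_{\textrm{red}}$), $\calY$ is flat over $\calO_S$ and each $\widetilde{\rho}^*\calD_\alpha$ coincides with the Zariski closure of $F_\alpha = \rho^*D_\alpha$ in $\calY$, giving a valid model.

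For the main conclusion, first I lift $P_0$: since $P \neq P_0$ and $\widetilde{\rho}$ restricts to an isomorphism $Y \setminus E \to X \setminus \{P\}$, where $E$ denotes the exceptional divisor, $P_0$ has a unique preimage $Q_0 \in Y(K) \setminus E(K)$. A key observation is that $(\calX,\calD)(\calO_S) \cap U$ consists precisely of the $K$-points of $X$ whose adelic image lies in $U$, since $U \subset (\calX,\calD)(\A_K^S)$ already encodes the Campana $\calO_v$-condition at every $v \notin S$. Choose $T \supset S$ large enough that the local non-thinness hypothesis applies to $(\calY_T,\calF_T)(\calO_T)$ and such that $Q_0 \in (\calY_T,\calF_T)(\calO_T)$; the latter follows from $P_0 \in (\calX,\calD)(\calO_S) \subset (\calX_T,\calD_T)(\calO_T)$ together with the fact that $\widetilde{\rho}$ locally identifies $\calF$ with $\calD$ at $\calQ_0$. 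The hypothesis then supplies an open neighborhood $V \subset (\calY_T,\calF_T)(\A_K^T)$ of $Q_0$ such that $(\calY_T,\calF_T)(\calO_T) \cap V$ is non-thin in $Y(K)$.

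I now shrink $V$ to $V'$ so that, for every $Q \in V'$, the $v$-coordinate of $\rho(Q)$ lies in that of $U$ for each $v \notin T$; this is achievable by continuity of $\rho$ and openness of $(\calX,\calD)(\calO_v)$ in $X(K_v)$ (Note \ref{note:open}). The main technical obstacle is at places $v \in T \setminus S$, where $V$ has no $v$-coordinate in $(\calY_T,\calF_T)(\A_K^T)$, so a separate argument is required to ensure $\rho(Q)$ satisfies both the Campana condition at $v$ and the constraint from $U$ at $v$. One handles this by exploiting the flexibility of $T$ in the hypothesis, choosing $T$ so that $T \setminus S$ is disjoint from the support of the non-trivial constraints of $U$, and then using openness of local Campana points around $P_0 = \rho(Q_0)$ to ensure points in $\rho(V')$ remain Campana at the remaining places. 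Once this bookkeeping is arranged, every $Q \in (\calY_T,\calF_T)(\calO_T) \cap V'$ has $\rho(Q) \in (\calX,\calD)(\calO_S) \cap U$, and Lemma \ref{lem:thinBirInv} applied to the birational morphism $\rho$ gives that $\rho((\calY_T,\calF_T)(\calO_T) \cap V')$ is non-thin in $X(K)$, completing the proof.
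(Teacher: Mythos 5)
Your proposal mirrors the overall strategy of the paper's proof: lift $P_0$ to $Q_0 \in Y(K)$, take $T$ large enough for the hypothesis to apply and for $Q_0$ to be a Campana $\calO_T$-point, use local non-thinness of $(\calY_T,\calF_T)(\O_T)$ to produce a non-thin subset near $Q_0$, push this forward via $\rho$, and conclude with Lemma \ref{lem:thinBirInv}. You have correctly identified the delicate point, namely the places $v \in T \setminus S$, at which the open set $V' \subset (\calY_T,\calF_T)(\A_K^T)$ imposes no constraint on $Q$ and yet one must still ensure $\rho(Q) \in (\calX,\calD)(\calO_v) \cap U_v$. However, your proposed resolution does not close this gap.

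First, a smaller issue: for the model claim, you cannot enlarge $S$ — the statement asserts that $(\calY,\calF)$ is an $\calO_S$-model for the given $S$. Moreover $\calP$ is automatically flat over $\calO_S$ (it is a section), and it is precisely \emph{not} disjoint from $\calD_{\textrm{red}}$ in general; that intersection over a finite set of places is what produces $T$ in the first place. The paper instead invokes commutativity of blowups with flat base change, which gives the model claim directly.

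The more substantial issue is your handling of $T \setminus S$. You propose to choose $T$ so that $T \setminus S$ avoids the support of the non-trivial constraints of $U$, and then appeal to openness of local Campana points around $P_0$. This does not work. The set $T$ is forced to contain $S$ together with the (geometrically determined) places where $\calP$ meets $\calD_{\textrm{red}}$; you have no freedom to make $T \setminus S$ avoid a given finite set. And even if $U$ imposes only the trivial constraint $U_v = (\calX,\calD)(\calO_v)$ at some $v \in T \setminus S$, you still must verify that $\rho(Q) \in (\calX,\calD)(\calO_v)$ for $Q$ in your non-thin set. This is precisely the condition the blowup disturbs at those places (since $\widetilde{\rho}^{-1}(\calD)$ acquires extra exceptional components there), and it is not automatic for an arbitrary Campana $\calO_T$-point $Q$. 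Openness of $(\calX,\calD)(\calO_v)$ around $P_0$ would help only if you already knew that $\rho(Q)$ is $v$-adically close to $P_0$ at $v \in T \setminus S$, which the hypothesis (controlling only $v \notin T$) does not give you.

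The paper's mechanism is different: it first shrinks $U$ so that, at each $v \in T \setminus S$, the $v$-projection of $U$ lies inside a small neighbourhood $U'_v$ of $P_0$ contained in $(\calX,\calD)(\calO_v)$, and then it defines the non-thin subset $B$ to be the preimage of the \emph{shrunken} $U$ under $(\calY_T,\calF_T)(\O_T) \hookrightarrow Y(\A_K^S) \to X(\A_K^S)$. Membership in $B$ thus already encodes $\rho(Q) \in U'_v \subset (\calX,\calD)(\calO_v)$ at $v \in T \setminus S$, which is exactly what is needed to conclude $\rho(B) \subset (\calX,\calD)(\calO_S) \cap U$. You should restructure your argument to define your non-thin set as such a preimage rather than hoping the $T \setminus S$ conditions follow from openness.
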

\begin{proof}
The fact that $\left(\calY,\calF\right)$ is an $\O_S$-model for $\left(Y,F\right)$ follows from commutativity of blowups and flat base change \cite[Prop.~8.1.12(iii)]{LIU}. Let $P_0\in (\calX,\calD)(\O_S)$ be a point not equal to $P$ and $U\subset (\calX,\calD)(\A^S_K)$ an open neighbourhood of $P_0$. We have $\widetilde{\rho}^{-1}(\calD)=\widetilde{\rho}^*\calD+\calE$, where $\calE\subset \calY$ is supported over the set of places $T$ over which $\calP$ and $\calD$ intersect, which is finite since $P \not\in D_{\textrm{red}}$. Note that $\rho^{-1}(P_0) \in (\calY_T,\calF_T)(\calO_T)$ since for each $v\notin T$, the point $\calP_v$ and $\calD_v$ are disjoint.

For each $v\in T\setminus S$, there exists an open neighbourhood $U'_v\subset X(K_v)$ of $P_0$ contained in $\left(\calX,\calD\right)\left(\calO_v\right)$. Shrinking $U$ if necessary, we can assume that its image under $X(\A^S_K)\to X(K_v)$ is contained in $U'_v$. Let $B$ be the preimage of $U$ under
$$
(\calY_T,\calF_T)(\O_T)\injects Y(\A^S_K) \to X(\A^S_K).
$$
Enlarging $T$ if necessary, we can assume that $B$ is not thin by the hypothesis. We claim that $\rho\left(B\right) \subset \left(\calX,\calD\right)\left(\O_S\right)$. Let $Q \in B$. The places where we must check that $\rho\left(Q\right)$ is a local Campana point are those where $\calQ$ meets $\calE$. These places are contained in $T$, and for $v \in T$, we have $\rho(Q) \in U'_v \subset \left(\calX,\calD\right)\left(\calO_v\right)$. Hence, $\rho(B)\subset(\calX,\calD)(\O_S)$, and the result follows as $\rho(B)$ is not thin by Lemma \ref{lem:thinBirInv}.
\end{proof}
\subsection{Proof of Theorem \ref{thm:dp}}
We are now ready to prove Theorem \ref{thm:dp}. In fact, we will prove the stronger result that the Campana points are locally not thin.
\begin{thm} \label{thm:dplnt}
Let $X$ be a del Pezzo surface of degree $d$ over a number field $K$ and let $L \subset X$ be a rational line. Let $D = \left(1-\frac{1}{m}\right)L$ for some integer $m \geq 2$. Let $\left(\calX,\calD\right)$ be an  $\calO_S$-model of $\left(X,D\right)$. Suppose that one of the following holds:
\begin{enumerate} [label=(\roman*)]
\item $d=4$ and $X$ has a conic fibration.
\item $d=3$.
\item $d =2$ and $X$ has a conic fibration.
\end{enumerate}
Then $(\calX,\calD)(\O_S)$ is locally not thin. In particular, since $L(K)\subset(\calX,\calD)(\O_S)$, $(\calX,\calD)$ satisfies CHP.
\end{thm}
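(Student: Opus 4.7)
The plan is to reduce cases (a) and (b) to case (c) via the blow-up Lemma \ref{lem:blowup}, and then to establish case (c) by fibring $X$ over $\P^1$ using the given conic fibration and invoking Theorem \ref{thm:bsfp}, with Proposition \ref{prop:deg2P1} providing the Campana Hilbert property on each relevant fibre.

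Since $L(K)$ is infinite, one can pick sufficiently general $K$-rational points of $X \setminus L$ to blow up. For case (a), blowing up such a point $P$ yields a del Pezzo surface of degree $3$ still containing the strict transform of $L$ as a rational line, so Lemma \ref{lem:blowup} propagates ``locally not thin'' back to $X$, reducing to case (b). For case (b), we blow up at a carefully chosen $K$-rational point so that the resulting degree-$2$ del Pezzo inherits a Galois-invariant conic fibration whose general fibre meets the transformed line in degree $2$; verifying that such a $P$ exists requires a Galois-lattice analysis on the Picard group of the blow-up.

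For case (c), let $\pi\colon X \to \P^1_K$ be the conic fibration, with $L \cdot F = 2$ for a general fibre $F$. Fix $P_0 \in (\calX,\calD)(\O_S)$ and an open neighbourhood $U \subset (\calX,\calD)(\A^S_K)$ of $P_0$, and set $A = (\calX,\calD)(\O_S) \cap U$. By Theorem \ref{thm:bsfp} applied to $\pi$, it suffices to exhibit a non-thin subset $B \subset \P^1(K)$ such that $\pi^{-1}(t)(K) \cap A$ is not thin for each $t \in B$. Take $B$ to be the set of $t \in \P^1(K)$ such that $F_t$ is smooth and split and $L \cap F_t$ is a single degree-$2$ closed point $P_t$. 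The latter condition is $t \notin \pi|_L(L(K))$, and since $\pi|_L\colon L \to \P^1$ is a non-trivial degree-$2$ cover of rational curves, the complement of its image in $\P^1(K)$ is non-thin; combining this with the non-thinness of the split-fibre locus (arising from the quadratic sections supplied by $L$ together with a Hilbert irreducibility argument on the generic conic over $K(t)$) shows that $B$ is non-thin. For each such $t$, Lemma \ref{lem:fibration} identifies the restricted orbifold on $F_t \cong \P^1_K$ with $(\P^1, (1-\tfrac{1}{m})P_t)$, and Proposition \ref{prop:deg2P1} supplies CWA. By Theorem \ref{cwwathm}, the fibre Campana points are not thin; moreover, CWA allows us to approximate the local data defining $U$, placing a non-thin set of fibre Campana points inside $A$, as required.

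The main obstacle I foresee is the reduction from (b) to (c), where we must produce a Galois-invariant conic fibration of the correct intersection type with the transformed line on the blow-up of a cubic surface. A secondary technical point is verifying the non-thinness of the split-fibre locus of $\pi$ in $\P^1(K)$ when the generic conic is not already split over $K(t)$; this should follow from combining the quadratic sections furnished by $L$ with Hilbert irreducibility, but the details require care.
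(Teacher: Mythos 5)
Your overall plan — reduce (a),(b) to (c), fibre over $\P^1$, invoke Theorem~\ref{thm:bsfp} and Proposition~\ref{prop:deg2P1} on the fibres — is the right skeleton, but there is a genuine gap at the heart of case~(c), and the reduction from~(b) is needlessly worried about.

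The central gap is your claim that the split-fibre locus $B \subset \P^1(K)$ is non-thin, which you wave away as following from ``quadratic sections supplied by $L$ together with a Hilbert irreducibility argument on the generic conic over $K(t)$.'' This does not work: $L$ is a $2$-section, so it furnishes degree-$2$ points on fibres, not rational ones, and Hilbert irreducibility says nothing about producing $K$-points on the conic fibres. A priori the set $\{t : \pi^{-1}(t)(K)\neq\emptyset\}$ could be thin, and intersecting it with the genuinely non-thin complement of $\pi|_L(L(K))$ gains you nothing. Establishing precisely this non-thinness is the hard part of the argument, and the paper's mechanism for it is a \emph{double} conic bundle: on a degree-$2$ del Pezzo the classes $C$ and $-2K_X - C$ give two conic fibrations $\pi_1,\pi_2$ with $\pi_1^{-1}(P)\cdot\pi_2^{-1}(Q)=4$. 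One defines $Z_1$ as the $\pi_1$-image of the Campana points in $A^\circ$ (shown to equal $Z_2$ via Lemma~\ref{lem:fibration}, Proposition~\ref{prop:projcampdense} or~\ref{prop:deg2P1}, and Lemma~\ref{lem:twoModels}), and then, given any thin $B=\bigcup f_i(Y_i)$, one picks a smooth fibre $C_2$ of $\pi_2$ passing through $\pi_2(A^\circ)$ and avoiding the branch loci of the $\pi_2|_{\pi_1^{-1}(P)}$ over branch points of the $f_i$. The fibre products $C_2\times_{\P^1}Y_i$ are then degree-$\geq 2$ covers of $C_2\cong\P^1$, so $\{P\in C_2(K):\pi_1(P)\in B\}$ is thin while $C_2(K)\cap A^\circ$ is non-thin (again by CWA on $C_2$ with its induced orbifold), yielding a fibre of $\pi_1$ meeting $A^\circ$ over a point outside $B$. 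Without some version of this ``use the transverse fibration to dodge an arbitrary thin set'' step, your argument that a non-thin set of fibres carries Campana points is not established.

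Two smaller points. First, the reduction from (b) to (c) is far simpler than your ``Galois-lattice analysis'': on a cubic surface, $-K_X - L$ is already a conic-fibration class with $L\cdot(-K_X-L)=2$, and blowing up a rational point of $X$ off $L\cup\{P_0\}$ (possible since $X$ is unirational, hence $X(K)$ is dense) gives a degree-$2$ del Pezzo whose composite with $\pi$ is a conic fibration; Lemma~\ref{lem:blowup} then transports the conclusion back. Second, you assume $L\cdot F=2$ throughout case~(c), but on a degree-$2$ del Pezzo one only has $L\cdot C + L\cdot(-2K_X-C) = 2$, so after swapping fibrations one may be in the case $L\cdot C = 1$; the paper handles both subcases (using Proposition~\ref{prop:projcampdense} in the $L\cdot C=1$ case and Proposition~\ref{prop:deg2P1} in the $L\cdot C=2$ case), and your proposal should address both as well.
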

\begin{proof}
Let $A=\left(\calX,\calD\right)(\O_S)$ and $P_0\in A$. We show that $(\calX,\calD)(\O_S)$ is locally not thin at $P$.

In case (ii), the divisor class $-K_X - L$ gives rise to a conic fibration of $X$, hence in all cases, $X$ has a conic fibration. Let $C$ be the divisor class of the fibres of the conic fibration, and let $U\subset X(\A_K^S)$ be an open subset containing $P_0$. Set $A^\circ=A\cap U$.

(i),(ii) Since $X$ contains a rational point and $d \geq 3$, it is unirational (see \cite[Rem.~9.4.11]{POO}). Then $X(K)$ is dense, and we can blow up $Y \xrightarrow{\rho} X$ away from $P_0$ and $D_{\textrm{red}}$ so that $Y$ is a del Pezzo surface over $K$ of degree $2$. Let $\calY$ be the $\O_S$-model for $Y$ obtained by blowing up $\calX$ at the corresponding $\O_S$-points. Note that the composition $Y\to X\to\P^1$ of $\rho$ with the conic fibration of $X$ induces a conic fibration of $Y$. Since $Y$ is a del Pezzo surface of degree $2$, it suffices by Lemma \ref{lem:blowup} to consider the case (iii).

(iii) The classes $C$ and $-2K_X - C$ give rise to two conic fibrations $\pi_i \colon X \rightarrow \P^1$, $i=1,2$ respectively, with $\pi_1^{-1}\left(P\right) \cdot \pi_2^{-1}\left(Q\right) = 4$ (see \cite[Proof~of~Lem.~3.5]{STR}) for any $P,Q\in\P^1(K)$. Without loss of generality (by swapping the two fibrations if necessary), we have either (a) $L\cdot C=1$ or (b) $L\cdot C=2$ by the adjunction formula \cite[Exercise~V.1.3]{HART}. By spreading out, there is a finite subset $S\subset T\subset \Omega_K$ such that each $\pi_i$ extends over $\Spec\O_T$ to a flat morphism $\Pi_i\colon \calX_T\to \P^1_{\O_T}$. By enlarging $T$ if necessary, we may assume that $\calD_{\textrm{red}}$ is flat over $\Spec \O_T$.

Using weak approximation on $\P^1$, choose a point $P\in \P^1(K)\cap \pi_1(U)$ such that $\pi_1^{-1}(P)$ is smooth. Set $C_1 = \pi_1^{-1}(P)$, and denote by $\calC_1$ the closure of $C_1$ in $\calX_T$. Further enlarging $T$ if necessary, we may assume that $\mathcal{D}_{\textrm{red}}$ is flat relative to $\Pi_1$, and that it is flat relative also to $\Pi_2$ in case (a). Shrinking $U$ if necessary, we may assume that for each $v\in T\setminus S$, the image of the projection $U\to X(K_v)$ lies in $(\calX,\calD)(\O_v)$. In other words, any Campana $\O_T$-point lying inside $U$ is automatically a Campana $\O_S$-point, so
\begin{equation}\label{eqn:ot_os}
(\calX_T,\calD_T)(\O_T)\cap U=A^\circ.
\end{equation}

Let
\begin{align*}
Z_1 & =\{ P \in \P^1(K) : \pi_1^{-1}(P)\textnormal{ smooth and }\pi_1^{-1}(P)(K)\cap A^\circ \neq\emptyset \}, \\
Z_2 & =\{ P \in \P^1(K) : \pi_1^{-1}(P)\textnormal{ smooth and }\pi_1^{-1}(P)(K)\cap A^\circ \textnormal{ is not thin in }\pi_1^{-1}(P) \}.
\end{align*}
To conclude that $A^\circ$ is not thin, it suffices to show that $Z_2$ is not thin by Theorem \ref{thm:bsfp}. If $P\in Z_1$, then by Lemma \ref{lem:fibration}, $\pi_1^{-1}(P)\cap A^{\circ}$ is the same as the set of Campana $\O_T$-points on a model of $(\pi_1^{-1}(P), \pi_1^{-1}(P)\cap D)$ intersected with $U$. Note that Proposition \ref{prop:projcampdense} and Proposition \ref{prop:deg2P1} imply that the latter intersection is not thin in $\pi_1^{-1}(P)$ (the choice of model here does not matter by Lemma \ref{lem:twoModels}). Hence, $\pi_1^{-1}(P)\cap A^{\circ}$ is not thin, so $Z_1=Z_2$. Then it suffices to show that $Z_1$ is not thin.

By Lemma \ref{lem:fibration} and the above, $(\calC_1,\calC_1\cap\calD_T)$ is an $\O_T$-model for $(C_1,C_1\cap D_T)$. Then Lemma \ref{lem:twoModels} and Proposition \ref{prop:projcampdense} (resp.\ Proposition \ref{prop:deg2P1}) show that $(\calC_1,\calC_1\cap\calD_T)$ satisfies CWA in case (a) (resp.\ case (b)). By Lemma \ref{lem:fibration}, $(\calC_1,\calC_1\cap\calD_T)(\O_T)=(\calX_T,\calD_T)(\O_T)\cap C_1(K)$, giving infinitely many Campana $\O_T$-points lying on $U$. By \eqref{eqn:ot_os}, this gives infinitely many points in $\pi_2(A^\circ)\subset \P^1(\A_K^S)$.

Let $B \subset \P^1(K)$ be thin. We adapt the method in the proof of \cite[Prop.~3.3]{STR} to construct a fibre $C_2$ of $\pi_2$ such that $\pi_1(C_2\left(K\right) \cap A^\circ) \not\subset B$. From this, we can deduce that $Z_1$ is not contained in $B$, and so $Z_1$ is not thin. We can exclude type I thin 
sets from $B$ since any type I thin set is contained in a type II thin set.

Write $B=\cup_{i=1}^n f_i(Y_i\left(K\right))$ for $f_i\colon Y_i\to \P^1$ finite covers of degree $\geq2$. Let $\Sigma_i\subset \P^1$ denote the finitely many branch points of $f_i$, and let $\Lambda$ denote the union of the branch loci of the morphisms
$$
\pi_2|_{\pi_1^{-1}\left(P\right)} \colon \pi_1^{-1}\left(P\right) \rightarrow \P^1,\ P \in \bigcup_{i=1}^n\Sigma_i,
$$
which is a finite set. Thus, we can take $Q \in \pi_2\left(A^\circ\right) \setminus \Lambda$ such that $\pi_2^{-1}\left(Q\right)$ is smooth, and set $C_2 = \pi_2^{-1}\left(Q\right)$. The proof of \cite[Prop.~3.3]{STR} shows that the branch locus of $\pi_1|_{C_2} \colon C_2 \rightarrow \P^1$ is disjoint with the branch locus of each of the morphisms $f_i \colon Y_i \rightarrow \P^1$, hence the fibre products $C_2 \times_{\P^1}Y_i$ are all covers of degree $\geq 2$ of $C_2$. Since rational points of the fibre product are in bijection with rational points of both curves mapping to the same point on $\mathbb{P}^1$, and since $C_2$ has the Hilbert property, the set $\{P\in C_2\left(K\right) : \pi_1(P)\in B\} \subset C_2\left(K\right)$ is thin. It now suffices to show that $C_2(K)\cap A^\circ$ is not thin. Let $\calC_2$ denote the closure of $C_2$ in $\calX_T$.

In case (a), we have
$$
C_2(K)\cap A^\circ=C_2(K)\cap (\calX_T,\calD_T)(\O_T)\cap U=(\calC_2,\calC_2\cap\calD_T)(\calO_T)\cap U,
$$
where the first equality is by \eqref{eqn:ot_os} and the second equality is by Lemma \ref{lem:fibration}, recalling that $\calD_{\textrm{red}}$ may be assumed flat relative to $\Pi_2$. Note also that this set is non-empty as it contains $Q$. Lemma \ref{lem:twoModels} implies that $(\calC_2,\calC_2\cap\calD_T)$ satisfies CWA, so it follows that $(\calC_2,\calC_2\cap\calD_T)(\calO_T)\cap U$ is not thin.

In case (b), since $C_2$ does not meet $L$, we may assume upon enlarging $T$ that $\left(\calC_2\cap \calD_{\textrm{red}}\right)_T = \emptyset$. Hence, $C_2(K) \cap A^\circ=C_2\left(K\right) \cap U$. Since $C_2$ satisfies weak approximation, $C_2(K)\cap U$ is not thin in $C_2$.
\end{proof}

\end{document}